\newtheorem{theorem}{Theorem}
\newtheorem{proposition}{Proposition}
\newtheorem{corollary}{Corollary}
\newtheorem{lemma}{Lemma}
\newtheorem{definition}{Definition}
\theoremstyle{definition}
\newtheorem{remark}{Remark}
\newcommand{\bdm}{\begin{displaymath}}
\newcommand{\edm}{\end{displaymath}}
\newcommand{\bq}{\begin{equation}}
\newcommand{\eq}{\end{equation}}
\newcommand{\bqn}{\begin{equation*}}
\newcommand{\eqn}{\end{equation*}}
\newcommand{\rn}{\mathbb{R}^n}
\newcommand{\mklm}[1]{\left\{ #1 \right\}}
\newcommand{\eklm}[1]{\left\langle  #1 \right\rangle}
\newcommand{\N}{{\mathbb N}}
\newcommand{\C}{{\mathbb C}}
\newcommand{\R}{{\mathbb R}}
\newcommand{\D}{{\mathcal D}}
\newcommand{\E}{{\mathcal E}}
\newcommand{\X}{{\mathbb X}}
\renewcommand{\epsilon}{\varepsilon}
\renewcommand{\phi}{\varphi}
\renewcommand{\rho}{\varrho}
\newcommand{\1}{{ \bf  1}}
\newcommand{\Cinft}{{\rm C^{\infty}}}
\newcommand{\CT}{{\rm C^{\infty}_c}}
\renewcommand{\L}{{\rm L}}
\newcommand{\Lcal}{{\mathcal L}}
\renewcommand{\S}{{\mathcal S}}
\newcommand{\Sym}{{\rm S}}
\newcommand{\Syms}{{\rm S^{-\infty}}}
\newcommand{\Symsl}{{\rm S^{-\infty}_{la}}}
\newcommand{\SL}{\mathrm{SL}}
\newcommand{\SO}{\mathrm{SO}}
\newcommand{\g}{{\bf \mathfrak g}}
\renewcommand{\k}{{\bf \mathfrak k}}
\renewcommand{\a}{{\bf\mathfrak a}}
\newcommand{\n}{{\bf\mathfrak n}}
\newcommand{\p}{{\bf \mathfrak p}}
\newcommand{\h}{{\bf \mathfrak h}}
\newcommand{\U}{{\mathfrak U}}
\newcommand{\spl}{{\bf \mathfrak{sl}}}
\newcommand{\Ad}{\mathrm{Ad}\,}
\newcommand{\ad}{\mathrm{ad}\,}
\newcommand{\sgn }{\mathrm{sgn }\,}
\renewcommand{\det}{\mathrm{det}\,}
\renewcommand{\Re}{\mathrm{Re}\,}
\newcommand{\Fix}{\mathrm{Fix}}
\DeclareMathOperator{\supp}{supp}
\DeclareMathOperator{\rank}{rank}
\DeclareMathOperator{\Tr}{Tr}
\DeclareMathOperator{\gd}{\partial}
\newcommand{\dbar}{{\,\raisebox{-.1ex}{\={}}\!\!\!\!d}}
\begin{document}

\author{Aprameyan Parthasarathy and Pablo Ramacher}
\title[Integral operators on Oshima compactifications of
Riemannian symmetric spaces]{Integral operators on the
Oshima compactification of a Riemannian symmetric space
of non-compact type. Regularized traces and characters}
\address{Aprameyan Parthasarathy and Pablo Ramacher,
Fachbereich Mathematik und Informatik,
Philipps-Universit\"at Marburg,
Hans-Meerwein-Str., 35032 Marburg, Germany}
\subjclass{22E46, 53C35, 32J05, 58J40, 58C30, 20C15}
\keywords{Riemannian symmetric spaces of non-compact
type, Oshima compactification, regularized traces, characters}
\email{apra@mathematik.uni-marburg.de,
ramacher@mathematik.uni-marburg.de}
\thanks{This work was financed by the DFG-grant RA
1370/2-1.}
\date{June 2, 2011}

\begin{abstract} Consider a Riemannian symmetric space  $\X= G/K$ of
non-compact type, where $G$ denotes a connected, real, semi-simple Lie group with finite center, and $K$ a maximal compact subgroup of $G$. Let $\widetilde \X$ be its Oshima compactification, and $(\pi,\mathrm{C}(\widetilde \X))$ the regular
representation of $G$ on $\widetilde \X$. 
%We study integral operators on $\widetilde X$ of the form $\pi(f)$, where $f$ is a rapidly falling function on $G$, and characterize them within the framework of totally characteristic pseudodifferential operators, describing the singular nature of their kernels, which originates in  the non-transitivity of the underlying $G$-action. Since the holomorphic semigroup generated by a strongly  elliptic operator associated to the representation $\pi$, as well as  its resolvent, can be characterized as integral operators of the mentioned type, we obtain a description of the asymptotic behavior of the corresponding semigroup and resolvent kernels.  
In this paper, a regularized trace  for the convolution operators $\pi(f)$ is defined, yielding a distribution on $G$ which can be interpreted as   global character of $\pi$. In case that $f$ has  compact support in a certain set of transversal elements, this distribution is a locally integrable function, and given by a fixed point formula analogous to the formula for the global character of an induced representation of $G$. 
\end{abstract}

\maketitle

\tableofcontents

\section{Introduction}

Let $G$ be a connected, real,  semi-simple Lie group with finite center,  $K$ a maximal compact subgroup, and $G/K$ the corresponding Riemannian symmetric space which is assumed to be of non-compact type.
 In this paper, a distribution character for the regular representation of $G$ on the Oshima compactification of $G/K$  is introduced, and a corresponding character formula is proved. The paper is a continuation of \cite{parthasarathy-ramacher11}, to which we shall refer in the following as Part I. 

In his early work on infinite dimensional representations of semi-simple Lie groups, Harish--Chandra \cite{harish-chandra54} realized that the correct generalization of the character of a finite-dimensional representation was a distribution on the group given by the trace of a convolution operator on representation space. This distribution  character is given by a locally integrable function which is analytic on the set of regular elements, and  satisfies character formulas analogous to the finite dimensional case. Later, Atiyah and Bott  \cite{atiyah-bott68} gave a similar description of the  character of a parabolically  induced representation  in their work on  Lefschetz fixed point formulae for elliptic complexes. More precisely, let  $H$ be  a closed co-compact subgroup of $G$,  and $\rho$ a representation of $H$ on a finite dimensional vector space $V$. If $T(g)=(\iota_\ast \rho)(g)$ is the representation of $G$ induced by $\rho$ in the space of sections over $G/H$ with values in the homogeneous vector bundle $G\times_HV$, then its  distribution character   is  given by the distribution 
\bqn 
\Theta_T:\CT(G) \ni f \longmapsto \Tr \, T(f), \qquad T(f)=\int _G f(g) T(g) d_G(g),
\eqn
where  $d_G$ denotes a Haar measure on $G$.  The point to be noted is that $T(f)$ is a smooth operator, and since $G/H$ is compact, does have a well-defined trace.  On the other hand, assume that    $g\in G$ acts on $G/H$ only with simple fixed points. In this case, a transversal trace     $\Tr^\flat T(g)$ of  $T(g) $ can be defined within the framework of pseudodifferential operators, which is given by a sum over fixed points of $g$.  Atiyah and Bott then showed that, on an open  set $G_T\subset G$,
\begin{equation*}
 \Theta_T(f)=\int_{G_T} f(g) \Tr^\flat T(g) d_G(g), \qquad f \in \CT(G_T).
\end{equation*}
This means that, on $G_T$, the character $\Theta_T$ of the induced representation $T$ is  represented by the locally integrable function $\Tr^\flat T(g)$, and its computation   reduced to  the evaluation of a sum over fixed points. When $G$ is a p-adic reductive group defined over a non-Archimdean local field of characteristic zero, a similar analysis of the character of a parabolically induced representation was  carried out in \cite{clozel}. 

In this paper, we consider the regular representation $\pi$ of $G$ on the Oshima compactification $\widetilde \X$ of a Riemannian symmetric space $\X= G/K$ of non--compact type given by
\bqn 
\pi(g) \phi(\tilde x) = \phi(g^{-1} \cdot \tilde x), \qquad \phi \in \Cinft(\widetilde \X).  
\eqn
 Since the $G$-action on $\widetilde \X$ is not transitive, 
the corresponding convolution operators $\pi(f)$,   $ f \in \CT(G)$, are not smooth, and therefore do not have a well-defined trace. Nevertheless, it was shown in Part I that they can be characterized as totally characteristic pseudodifferential operators of order $-\infty$. Using this fact, we are able to define  a regularized trace $\Tr_{reg} \pi(f)$ for the operators $\pi(f)$, and in this way obtain a map 
$$
\Theta_\pi:\CT(G) \ni f \mapsto \Tr_{reg}(f) \in \C,
$$ 
 which is shown to be a distribution on $G$. This distribution is defined to be the character of the representation $\pi$. We then  show that,  on a certain open set $G(\widetilde \X)$ of transversal elements,
\bqn
\Tr_{reg} \pi(f)=\int_{G(\widetilde \X)} f(g) \Tr^\flat \pi(g) d_G(g), \qquad f \in \CT(G(\widetilde \X)),
\eqn
where, with the notation $\Phi_g(\tilde x)=g \cdot \tilde x$,
\bqn
\Tr^\flat \pi(g)=  \sum_{\tilde x \in \mathrm{Fix}(g)}     \frac{1 }{ |\det (1 -d\Phi_{g^{-1}}(\tilde x))|},
\eqn
the sum being over the (simple) fixed points of $g \in G(\widetilde \X)$ on $\widetilde \X$. Thus,  on the open set  $G(\widetilde \X)$, $\Theta_\pi$ is  represented by the locally integrable function $\Tr^\flat \pi(g)$,  which is given by a formula similar to the character of a parabolically induced representation.  It is likely that similar distribution characters could be introduced for  $G$-manifolds with a dense union of open orbits, or for spherical varieties, and that corresponding character formulae could be proved.

This paper is structured as follows. In Section \ref{sec:prelim}, the main results of Part I  that will be needed in the sequel are recalled. The regularized trace for the convolution operators $\pi(f)$ is defined in Section \ref{Sec:3}, while  the transversal trace of a pseudodifferential operator is introduced in Section \ref{sec:AB}, followed by a discussion of the global character of an induced representation. After studying $G$-actions on homogeneous spaces in Section \ref{Sec:4}, we  prove  that  the distribution $\Theta_\pi$ is regular on the set of transversal elements  $G(\widetilde \X)$, and given by the locally integrable function $\Tr^\flat \pi(g)$. This is done in Section \ref{Sec:6}. In the last section, the Oshima compactification of  $\X=\SL(3,\R)/\SO(3)$ is described in detail.

\section{Preliminaries}
\label{sec:prelim}

In this section we shall briefly recall the main results of  Part I relevant to our purposes.  Let $G$ be a connected, real, semi-simple Lie group with
finite centre and Lie algebra $\g$, and denote by
$\langle X,Y\rangle = \Tr \, (\ad X\circ \ad Y)$ the
\emph{Cartan-Killing form} on $\g$. Let $\theta$ be a
Cartan involution on $\g$, and
$$\g = \k\oplus\p$$
the corresponding Cartan decomposition. Put $\langle X,Y\rangle
_\theta:=-\langle X,\theta Y\rangle $. Consider further a maximal
Abelian subspace $\a$ of $\p$. Then $\ad(\a)$ constitutes a commuting
family of self-adjoint operators on $\g$ relative to
$\langle,\rangle_{\theta}$, and one defines for each
$\alpha\in\a^*$  the simultaneous
eigenspaces $\g^\alpha=\{X\in\g:[H,X]=\alpha(H)X \,
\text{for all } H\in\a\}$. Let 
$\Sigma=\{\alpha\in\a^*:\alpha\neq0,\g^\alpha\neq\{0\}\}$ be the set of roots of $(\g,\a)$,
 $\Sigma^{+}=\{\alpha\in\Sigma:\alpha> 0\}$  a \emph{set of positive roots}, and
$\Delta=\{\alpha_1,\dots ,\alpha_l\}$ the \emph{set of simple roots}. 
Define $\n^+=\bigoplus_{\alpha\in\Sigma^{+}}\g^\alpha,
\, \n^-=\theta(\n^+)$, and write $K, A, N^+$ and $N^- $
for the analytic subgroups of $G$ corresponding to
$\k,\, \a,\, \n^+$, and $ \n^-$, respectively. Let $M$ and $M^\ast$ be the centralizer and the normalizer of $ \a $ in $K$, respectively. 
Consider then the Oshima compactification $\widetilde{\X}$ of the Riemannian symmetric space $\X=G/K$  which is assumed to be of non-compact type.  It is a simply connected, compact,
real-analytic manifold without boundary carrying a real-analytic $G$-action. The corresponding orbital
decomposition of $\widetilde{\X}$ is of the form
\bq
\label{eq:decomp}
\widetilde{\X}\simeq \bigsqcup_{\Theta\subset\Delta}
2^{\#\Theta}(G/P_\Theta(K)) \quad \text{(disjoint
union)},
\eq
the union being over subsets of $\Delta$, where $\#\Theta$ is the number of elements of $\Theta$,
and $2^{\#\Theta}(G/P_\Theta(K))$ denotes the disjoint union
of $2^{\#\Theta}$ copies of the homogeneous space $G/P_\Theta(K)$, where  $P_\Theta(K)$ is a certain closed subgroup of $G$ associated to  $\Theta$. In particular,  for $\Theta=\Delta$, one has $P_\Delta(K)=K$, while for $\Theta=\emptyset$, $P_\emptyset(K)=P$.  In what follows, denote by $\widetilde \X_\Theta$ a component in $\widetilde \X$ isomorphic to $G/P_\Theta(K)$. 
The orbital decomposition is of normal crossing type, meaning that  for every point in $\widetilde{\X}$ there exists a
local coordinate system $(n_1,\dots , n_k,t_1,\dots ,
t_l)$ in a neighbourhood of that point such that two
points with coordinates $(n_1,\dots ,n_k,t_1,\dots ,t_l)$ and
$(n'_1,\dots ,n'_k,t'_1,\dots ,t'_l)$ belong to the same
$G$-orbit if, and only if, $\sgn t_j=\sgn t'_j$  for all
$j=1,\dots, l$. For a detailed description of $\widetilde \X$, the reader is referred to Part I.
On $\widetilde \X$, there is a natural representation 
of $G$ given by
\bqn 
\pi(g) \phi(\tilde x) =\phi(g^{-1} \cdot \tilde x),
\qquad \phi \in \mathrm{C}(\widetilde \X),
\eqn
where $\mathrm{C}(\widetilde \X)$ denotes the Banach space of continuous, complex-valued functions on $\widetilde \X$.  Let  $d_G$ be a Haar measure on $G$, and  $\S(G)$  the space of rapidly decreasing functions on $G$ introduced in Part I, Definition 1. Let further $\Omega $ be the density bundle on $\widetilde \X$, and consider for every $f\in \S(G)$  the continuous linear operator
\begin{equation*}
\pi(f):\Cinft(\widetilde \X) \longrightarrow
\Cinft(\widetilde \X) \subset \D'(\widetilde \X),
\end{equation*}
with Schwartz kernel given by the distributional section
$\mathcal{K}_f \in \D'(\widetilde \X \times \widetilde
\X, {{\bf 1}} \boxtimes \Omega)$.
Let  $$\mklm{(\widetilde{U}_{m_w},\phi^{-1}_{m_w})}_{w \in W}$$ be the finite atlas of $\widetilde \X$ constructed in Part I, where $W=M^\ast/M$ denotes the Weyl group of $(\g,\a)$, and $m_w \in M^\ast$, a representative of $w \in W$. The coordinates on each of the charts of this atlas are then precisely of the form $(n,t)=(n_1, \dots, n_k, t_1,\dots, t_l)$ described above.  For each point $\tilde x \in \widetilde \X$, choose open neighborhoods  $\widetilde W_{\tilde x}\subset \widetilde W_{\tilde x}'$ of $\tilde x$ contained in a chart $\widetilde{U}_{m_w(\tilde x)}$.   Since $\widetilde \X$ is compact, we can find a finite subcover of the cover $\mklm{\widetilde W_{\tilde x}}_{\tilde x\in \widetilde \X}$,  and in this way obtain a finite atlas $\mklm{\widetilde W_\gamma,\phi^{-1}_\gamma}_{\gamma\in I}$ of $\widetilde \X$, where  for simplicity we wrote $\phi_\gamma=\phi_{m_w(\tilde x)}$. Further, let $\mklm{\alpha_\gamma}_{\gamma\in I}$ be a partition of unity subordinate to the above atlas, and let $\mklm{\bar \alpha_\gamma}_{\gamma\in I}$ be another set of functions satisfying $ \bar \alpha_\gamma \in \CT(\widetilde W_\gamma')$ and $\bar \alpha_{\gamma|\widetilde W_\gamma}\equiv 1$.  Consider now the localization of $\pi(f)$ with respect to the atlas above given by
 \bqn 
A_{f}^{\gamma} u=[\pi(f)_{|\widetilde W_{\gamma}}(u\circ
\phi_{\gamma}^{-1})]\circ \phi_{\gamma}, \qquad
u\in\CT(W_{{\gamma}}), \,
W_{\gamma}=\phi^{-1}_{\gamma}(\widetilde W_\gamma)\subset \R^{k+l}.
 \eqn 
  Writing $\phi_{\gamma}^{g}= \phi_{\gamma}^{-1}\circ g^{-1}\circ \phi_{\gamma}$ and $x=(x_{1},\dots,x_{k+l})=(n,t)\in {W}_\gamma$ we obtain
\begin{equation*}
A_{f}^{\gamma} u(x)=\int_{G}  f(g)[(u\circ \phi_{\gamma}^{-1}) \bar \alpha_\gamma](g^{-1} \cdot \phi_\gamma(x)) \, d_G(g)=\int_{G}  f(g) c_{\gamma}(x,g)(u\circ \phi_{\gamma}^{g})(x)d_G(g),
\end{equation*}
where we put $c_{\gamma}(x,g)=\bar \alpha_\gamma(g^{-1} \cdot \phi_\gamma(x))$. Next, define the smooth functions
\begin{equation}
 a_{f}^{\gamma}(x,\xi)=\int_{G}e^{i(\phi_{\gamma}^{g}(x)-x )\cdot\xi} c_{\gamma}(x,g)f(g) d_G(g),
\end{equation}
and let $T_x$ be the diagonal $(l\times l)$-matrix with entries $x_{k+1}, \dots, x_{k+l}$. Introduce the auxiliary symbol 
\bq
\label{eq:auxsym}
\tilde{a}_f^{\gamma}(x,\xi)=a_{f}^{\gamma}(x,({\1}_k\otimes T^{-1}_{x})\xi)=\int_{G}e^{i\Psi_\gamma(g,x)\cdot  \xi} c_{\gamma}(x,g)f(g) d_G(g),
 \eq 
 where we put
\begin{equation*}
\Psi_\gamma(g,x)=[({\bf 1}_k\otimes T^{-1}_{x}) (\phi_{\gamma}^{g}(x)-x )]=(x_1(g\cdot \tilde x)-x_1(\tilde x),\dots,x_k(g\cdot \tilde x)-x_k(\tilde x),\chi_{{}_1}(g,\tilde x)-1,\dots,\chi_{{}_l}(g,\tilde x)-1),
\end{equation*}
the $\chi_j(g,\tilde x)$ being analytic functions, see Part I, equation (28). 
 One of the main results of Part I is  the following 
 \begin{theorem}
\label{thm:I}
Let $f \in \S(G)$. The restrictions of the operators $\pi(f)$ to the manifolds with corners
$\overline{\widetilde \X_{\Delta}}$ are totally
characteristic pseudodifferential operators of class
$\L^{-\infty}_b$. More precisely, the  operators
$\pi(f)$ are locally of the form \footnote{Here and in what follows we shall adhere to  the convention that, if not specified otherwise, integration is to be performed over  whole  Euclidean space $\rn$, with $n$ appropriate. In addition, we shall use the notation $\dbar \xi=(2\pi)^{-n} d\xi$. }
\begin{gather}
\label{20}
A^\gamma_fu(x)= \int e ^{i x \cdot \xi}
a_f^\gamma(x,\xi)\hat u(\xi) \dbar\xi, \qquad u \in
\CT(W_\gamma),
\end{gather}
where $a_f^\gamma(x,\xi)=\tilde a_f^\gamma(x,\xi_1,
\dots, \xi_k, x_{k+1} \xi_{k+1}, \dots, \xi_{k+l}
x_{k+l})$, and $\tilde a_f^\gamma(x,\xi) \in
\Symsl(W_\gamma \times \R^{k+l}_\xi)$ is a lacunary symbol given by \eqref{eq:auxsym}.  In particular, the kernel of the
operator $A^\gamma_f$ is determined by its restrictions
to $W_\gamma^\ast \times W_\gamma^\ast$, where
$W_\gamma^\ast=\{ x \in W_\gamma: x_{k+1} \cdots x_{k+l}
\not=0\}$, and given by the oscillatory integral
\begin{equation}
\label{20b}
K_{A_f^\gamma} (x,y)=\int e^{i(x-y) \cdot \xi}
a^\gamma_f(x,\xi) \dbar \xi.
\end{equation}
\end{theorem}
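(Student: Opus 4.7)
The plan is to start from the explicit expression for the localization $A_f^\gamma u$ displayed in the excerpt, insert Fourier inversion of $u$, and swap the $g$- and $\xi$-integrations in order to recognize $A_f^\gamma$ as a pseudodifferential operator. Substituting $u(y)=\int e^{iy\cdot \xi}\hat u(\xi)\,\dbar\xi$ at $y=\phi_\gamma^g(x)$, Fubini applies since $\hat u$ is Schwartz on $\R^{k+l}$, $f\in \S(G)$ is integrable on $G$, and $c_\gamma$ is smooth with compact $x$-support on $W_\gamma$; this yields
\begin{equation*}
A_f^\gamma u(x) = \int e^{ix\cdot \xi}\,a_f^\gamma(x,\xi)\,\hat u(\xi)\,\dbar\xi
\end{equation*}
with $a_f^\gamma(x,\xi)=\int_G e^{i(\phi_\gamma^g(x)-x)\cdot\xi} c_\gamma(x,g)f(g)\,d_G(g)$. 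Rescaling the last $l$ components of $\xi$ by $T_x^{-1}$ produces the auxiliary symbol $\tilde a_f^\gamma$ of \eqref{eq:auxsym}, and the identity $a_f^\gamma(x,\xi)=\tilde a_f^\gamma(x,\xi_1,\dots,\xi_k,x_{k+1}\xi_{k+1},\dots,x_{k+l}\xi_{k+l})$ is then immediate from the factored form of $\Psi_\gamma$ built from the analytic functions $\chi_j(g,\tilde x)$ of Part~I.

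The crux is to prove that $\tilde a_f^\gamma\in\Symsl(W_\gamma\times \R^{k+l}_\xi)$: that it is a smooth lacunary symbol which, together with all $x$- and $\xi$-derivatives, decays rapidly in $\xi$ uniformly for $x$ in compact subsets of $W_\gamma$. The decay should be obtained by non-stationary phase in the $g$-integration. After an Iwasawa-adapted parametrisation of $G$ in which the first $k$ components of $\Psi_\gamma$ record $N^-$-type coordinates and the last $l$ components record $A$-coordinates modulated by an analytic factor, one verifies that the gradient $\nabla_g(\Psi_\gamma(g,x)\cdot \xi)$ has a non-vanishing component that is controllable in terms of $|\xi|$, uniformly for $x$ in the chosen compact set and $g$ in a suitable exhaustion of $G$. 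Iterated integration by parts with the transposed first-order operator, combined with the Schwartz decay of $f\in\S(G)$, then gives $\sup_x|\tilde a_f^\gamma(x,\xi)|\le C_N(1+|\xi|)^{-N}$ for every $N$, and the analogous bounds for derivatives in $(x,\xi)$. Lacunarity -- the vanishing of the forbidden Fourier modes in $\xi_{k+1},\dots,\xi_{k+l}$ that defines the class $\Symsl$ -- is inherited from the analytic structure of the $\chi_j(g,\tilde x)$, whose Taylor expansions in the normal-crossing coordinates $t_j$ contain only non-negative powers.

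Once $\tilde a_f^\gamma$ is established as a lacunary $\Symsl$-symbol, the first assertion of the theorem is a matter of definition: the local form \eqref{20} with amplitude $a_f^\gamma(x,\xi)=\tilde a_f^\gamma(x,\xi_1,\dots,\xi_k,x_{k+1}\xi_{k+1},\dots,x_{k+l}\xi_{k+l})$ is, by the conventions of Part~I, that of a totally characteristic pseudodifferential operator in $\L^{-\infty}_b$ on the manifold with corners $\overline{\widetilde\X_\Delta}$. A small additional verification is needed that the class is preserved under the transition maps of the atlas $\{\widetilde W_\gamma\}$, but these are known from Part~I to preserve the boundary-defining functions $t_j$, so invariance is automatic. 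The kernel representation \eqref{20b} on $W_\gamma^\ast\times W_\gamma^\ast$ then follows by standard oscillatory integral manipulation after the diffeomorphic change of variables $\xi\mapsto (\1_k\otimes T_x^{-1})\xi$, which is admissible precisely where all $x_{k+j}$ are non-zero.

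The main obstacle is the uniform non-stationary phase estimate for $\tilde a_f^\gamma$. It requires global control of the map $g\mapsto \Psi_\gamma(g,x)$ with $g$ ranging over all of $G$ and $x$ over a chart of the compact manifold $\widetilde\X$, hence a careful bookkeeping of how Iwasawa-type factorisations of $g$ interact with the normal-crossing coordinates $(n,t)$ on $\widetilde\X$. Showing that the decay is uniform up to and including the strata $x_{k+j}=0$ is precisely where the factored form with $T_x$ and the lacunarity of $\tilde a_f^\gamma$ become indispensable.
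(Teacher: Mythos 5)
The paper's ``proof'' of this theorem is a citation to Part~I (Theorem~2), so the argument you are reconstructing lives in the earlier paper. From the ingredients quoted in the present paper (the formulas for $A_f^\gamma$, $a_f^\gamma$, $\tilde a_f^\gamma$ and $\Psi_\gamma$, and the integration-by-parts identity from Part~I, equation~(38), reused in the proof of Proposition~\ref{prop:1}), one can tell that your overall architecture --- Fourier-invert $u$, interchange the $g$- and $\xi$-integrals to read off the amplitude, rescale by $T_x^{-1}$ to pass to $\tilde a_f^\gamma$, and then prove symbol estimates via integration by parts in $g$ --- is the same as the paper's, and the cosmetic steps (the change of variables $\xi\mapsto(\1_k\otimes T_x^{-1})\xi$, the kernel formula on $W_\gamma^\ast\times W_\gamma^\ast$, and the coordinate-invariance remark) are fine.

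Where you are too vague is precisely at the two points you identify as ``the crux.'' First, the actual mechanism for the rapid decay of $\tilde a_f^\gamma$ in $\xi$ is not a generic non-stationary-phase bound uniform ``for $g$ in a suitable exhaustion of $G$'': the relevant identity (Part~I, eq.~(38)) expresses $e^{i\Psi_\gamma(g,x)\cdot\xi}$ as $(1+|\xi|^2)^{-N}$ times a sum of terms $b_\alpha^N(x,g)\,dL(X^\alpha)[e^{i\Psi_\gamma\cdot\xi}]$ with left-invariant differential operators on $G$, and the coefficients $b_\alpha^N(x,g)$ grow \emph{exponentially} in $g$, not uniformly boundedly. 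The decay of $\tilde a_f^\gamma$ therefore comes from the specific balance between this exponential growth and the super-exponential decay of $f\in\S(G)$; this is the whole reason the class $\S(G)$ is introduced in Part~I, and your sketch hides it. Second, your explanation of lacunarity is off: lacunarity is a support (or rapid-decay) condition on the inverse Fourier transform $\tilde A_f^\gamma(x,\cdot)$ in $z$, and it follows from the \emph{strict positivity} of the functions $\chi_j(g,\tilde x)$, which forces the last $l$ components of $\Psi_\gamma$ to lie in $(-1,\infty)$; the fact that the Taylor expansions of the $\chi_j$ in $t_j$ have non-negative powers (i.e.\ that $\chi_j$ is smooth) is true but not the relevant point. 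Finally, a minor slip: $c_\gamma(x,g)=\bar\alpha_\gamma(g^{-1}\cdot\phi_\gamma(x))$ is not compactly supported in $x$ for fixed $g$, so the Fubini justification needs the integrability of $f$ together with boundedness of $c_\gamma$ and $\hat u\in\S$, not ``compact $x$-support of $c_\gamma$.''
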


\begin{proof}
See Part I, Theorem 2.
\end{proof}

\section{Regularized traces}

\label{Sec:3}
We shall now define a regularized trace for the convolution operators $\pi(f)$ introduced in the previous section.  To begin with, note that, as a
consequence of Theorem \ref{thm:I}, we can write the
kernel of $\pi(f)$ locally in the form
\begin{align}
\label{27}
\begin{split}
  K_{A_f^\gamma}(x,y) &= \int e ^{i(x-y) \cdot  \xi} a^\gamma _f (x,\xi) \dbar \xi=\int e^{i(x-y) \cdot  ({\bf{1}}_k \otimes T_x^{-1}) \xi} \tilde a_f^\gamma(x,\xi) |\det ({\bf{1}}_k \otimes T_x^{-1})'(\xi)| \dbar \xi\\
&=\frac 1 {|x_{k+1}\cdots x_{k+l}|} \tilde A_f^\gamma(x,x_1-y_1, \dots, 1 - \frac{y_{k+1}}{x_{k+1}}, \dots), \qquad x_{k+1}\cdots x_{k+l} \not=0, 
\end{split}
\end{align}
where   $\tilde A_f^\gamma(x,y)$ denotes the inverse Fourier transform of the lacunary symbol $\tilde a_f^\gamma(x,\xi)$ given by
\begin{equation}
\label{eq:34}
  \tilde A_f^\gamma(x,y)= \int e ^{i y\cdot \xi} \tilde a_f ^\gamma(x,\xi) \, \dbar \xi.
\end{equation}
Since for $x \in W_\gamma$, the amplitude  $\tilde a_f^\gamma(x,\xi)$ is rapidly falling in $\xi$, it follows that $\tilde A_f^\gamma(x,y) \in \S(\R^n_y)$,  the Fourier transform being an isomorphism on the Schwartz space. Therefore, $K_{A^\gamma_f}(x,y)$ is rapidly decreasing as $| x_{j}| \to 0 $ for $x_j\not=y_j$ and   $k+1\leq j\leq  k+l$.  Furthermore, by the lacunarity of $\tilde a ^\gamma_f$, $ K_{A_f^\gamma}(x,y)$ is also rapidly decaying  as $| y_{j}| \to 0 $, $x_j\not=y_j$ and   $k+1\leq j\leq  k+l$.

Consider now the partition of unity $\{\alpha_{\gamma}\}$ subordinate to the atlas $\{(\widetilde W_{\gamma},
\phi^{-1}_{\gamma})\}$.  By equation \eqref{27}, the
restriction of the kernel of $A_f^{\gamma}$ to the
diagonal is given by
$$K_{A_f^\gamma}(x,x)=\frac 1 {|x_{k+1}\cdots x_{k+l}|}
\tilde A_f^\gamma(x,0), \qquad x_{k+1}\cdots x_{k+l}
\not=0.$$
These restrictions yield a family of smooth functions
$k_f^\gamma(\tilde x)=K_{A_f^\gamma}(\phi^{-1}_{\gamma}(\tilde x),\phi^{-1}_{\gamma}(\tilde x))$
which define a density $k_f$
 on 
\bqn
 2^{\#l}(G/K)\subset \widetilde \X.
\eqn
Nevertheless, the functions $k_f^\gamma(\tilde x)$ are not
locally integrable on the entire compactification $\widetilde \X$, so that
we cannot define a trace of $\pi(f)$ by integrating the
density $k_f$ over the diagonal $\Delta_{\widetilde \X
\times \widetilde \X} \simeq \widetilde \X$. Instead, we have the following

\begin{proposition}
\label{prop:1}
Let $f \in \S(G)$, $s \in \C$, and define for $\Re s>0$ 
\begin{align*}
\Tr_s \pi(f)&= \sum _\gamma \int _{W_\gamma}
(\alpha_\gamma \circ \phi_{\gamma})(x) |x_{k+1} \cdots
x_{k+l}|^s \widetilde A_f^\gamma(x,0) dx\\
&= \eklm{|x_{k+1} \cdots x_{k+l}|^s,\sum_\gamma
(\alpha_\gamma \circ \phi_{\gamma})\widetilde
A_f^\gamma(\cdot,0)}.
\end{align*}
Then $\Tr_s \pi(f) $ can be continued analytically to a 
meromorphic function in $s$ with at most poles at  $-1, -3, \dots$.
Furthermore, for $s \in \C-\mklm{-1,-3,\dots}$,
\begin{align}
\label{eq:Theta}
\Theta_\pi^s:\CT(G) \ni f \mapsto \Tr_s \pi(f) \in \C
\end{align}
defines a distribution density on $G$. 
\end{proposition}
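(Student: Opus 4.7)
Write $F_\gamma(x) := (\alpha_\gamma\circ\phi_\gamma)(x)\,\widetilde A_f^\gamma(x,0)$. The factor $\alpha_\gamma\circ\phi_\gamma$ has compact support in $W_\gamma$, and $\widetilde A_f^\gamma(x,\cdot)$ is Schwartz in $y$ uniformly on compact sets in $x$, being the inverse Fourier transform of the rapidly decreasing lacunary symbol $\widetilde a_f^\gamma(x,\xi)$ of Theorem \ref{thm:I}; hence $F_\gamma \in \CT(W_\gamma)$. Moreover, since $f$ enters linearly and under a compactly supported integral in \eqref{eq:auxsym}, the assignment $f\mapsto F_\gamma$ is continuous from $\CT(G)$ into $\CT(W_\gamma)$ in each Fr\'echet seminorm. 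It therefore suffices, for each index $\gamma$ separately, to analytically continue
\bqn
s\longmapsto \int_{W_\gamma} |x_{k+1}\cdots x_{k+l}|^{s}\,F_\gamma(x)\,dx
\eqn
and to show that each non-polar value depends continuously on $F_\gamma \in \CT(W_\gamma)$; summing over the finite atlas $\{\widetilde W_\gamma\}$ then yields the proposition.

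Only the transversal coordinates $t=(x_{k+1},\dots,x_{k+l})$ enter the singular factor, so by Fubini one reduces to the meromorphic continuation of the family of distributions
\bqn
T_s\colon \varphi\longmapsto \int_{\R^l} |t_1\cdots t_l|^{s}\,\varphi(t)\,dt,\qquad \varphi\in\CT(\R^l).
\eqn
Being the tensor product $|t_1|^{s}\otimes\cdots\otimes|t_l|^{s}$ with a common parameter, this reduces in turn to the classical Gel'fand--Shilov continuation of $|t|^{s}$ on $\R$. Writing $|t|^s=t_+^s+t_-^s$, the residue of $t_\pm^s$ at $s=-k$ is proportional to $(\pm 1)^{k-1}\delta^{(k-1)}$, so the residues cancel at every \emph{even} negative integer, and $|t|^s$ has simple poles only at $s=-1,-3,-5,\dots\,$. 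Applying this coordinatewise produces a meromorphic extension of $T_s$ to the whole plane with poles contained in $\{-1,-3,\dots\}$ (of order at most $l$). Concretely, one performs a Taylor expansion of $\varphi$ of order $2N$ in $t$ around $t=0$, integrates the polynomial part explicitly using $\int_{-1}^{1}|u|^{s+2m}\,du=2/(s+2m+1)$, and notes that the remainder is holomorphic for $\Re s>-2N-3$.

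For any $s\notin\{-1,-3,\dots\}$, choosing $N$ large enough places $s$ in the relevant half-plane, so that the Taylor-plus-remainder representation exhibits $\Tr_s \pi(f)$ as a finite linear combination of partial derivatives of $F_\gamma$ evaluated on the coordinate subspaces $\{t_{j_1}=\cdots=t_{j_r}=0\}$, plus an absolutely convergent integral of a smooth compactly supported function. Each of these terms depends continuously on $F_\gamma \in \CT(W_\gamma)$, and therefore, by the continuity of $f\mapsto F_\gamma$ established above, continuously on $f \in \CT(G)$. Consequently $\Theta_\pi^s$ is a continuous linear functional on $\CT(G)$, i.e., a distribution density on $G$.

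The pointwise meromorphic continuation of $|t|^s$ being classical, the principal technical burden lies in propagating the continuity statement through the construction: one must verify that $f\mapsto\widetilde A_f^\gamma(\cdot,0)$ is continuous in the $\CT(W_\gamma)$ topology (obtained by differentiating under the integral in \eqref{eq:auxsym} and exploiting the rapid decay of $\widetilde a_f^\gamma$ in $\xi$), and one must check that the parity cancellation of the residues at the even negative integers, manifest in a single variable, genuinely persists after forming the tensor product with the common parameter $s$. It is this cancellation which confines the exceptional set to the odd negative integers and leaves the regularized objects $\Theta_\pi^s$ available to play the role of a character in the next sections.
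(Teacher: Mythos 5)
Your proof follows essentially the same skeleton as the paper's: reduce to the meromorphic continuation of $|x_{k+1}\cdots x_{k+l}|^s$ paired with a test function, and then establish continuity of $f\mapsto\Tr_s\pi(f)$ in the relevant seminorms. Two remarks on where you diverge, one of which is a genuine omission.

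First, the continuation step. The paper invokes Bernshtein--Gel'fand's Lemma~2 directly for the multi-variable distribution $|x_{k+1}\cdots x_{k+l}|^s$ (and notes it even gives independent continuation in $s_1,\dots,s_l$). You instead rebuild this from the classical $1$-dimensional continuation of $|t|^s$ (with the even-integer residue cancellation between $t_+^s$ and $t_-^s$, which you state correctly) and a Taylor-plus-remainder argument, iterated over the $l$ transversal variables. This is a legitimate and more self-contained route; the price is that you need to be a bit more careful than your sketch indicates when iterating the continuation across the $l$ variables --- the intermediate object after integrating out $t_1$ must itself remain in $\CT(\R^{l-1})$ and depend continuously on $\varphi$ and holomorphically on $s$ away from the poles, which is true but should be said. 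In any case this buys you nothing the citation does not, and it costs you a page; the pole set and pole orders come out the same.

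Second --- and this is the actual gap --- your proof delegates the continuity of $f\mapsto F_\gamma=(\alpha_\gamma\circ\phi_\gamma)\,\widetilde A_f^\gamma(\cdot,0)$ from $\CT(G)$ into $\CT(W_\gamma)$ to a parenthetical remark, and the justification you offer (``$f$ enters linearly and under a compactly supported integral'') is not adequate as stated. The integral defining $\widetilde A_f^\gamma(x,0)=\int\tilde a_f^\gamma(x,\xi)\,\dbar\xi$ is over all of $\R^{k+l}_\xi$, not a compact set, so to control $\sup|\partial_x^\beta F_\gamma|$ in terms of seminorms of $f$ you must produce, uniformly in $x$ on compacta, a decay estimate $|\tilde a_f^\gamma(x,\xi)|\leq C_N(1+|\xi|^2)^{-N}\sum_{|\alpha|\leq 2N}\sup|dL(X^\alpha)f|$ and its analogues for $x$-derivatives. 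This is precisely what the paper does by inserting the identity (equation (38) of Part~I) $e^{i\Psi_\gamma\cdot\xi}=(1+|\xi|^2)^{-N}\sum b_\alpha^N\,dL(X^\alpha)[e^{i\Psi_\gamma\cdot\xi}]$, integrating by parts on $G$, and using the exponential-growth bound on $b_\alpha^N$ together with the rapid decay of $f$; the resulting estimate $|\Tr_s\pi(f)|\leq C_{\mathcal O}\sum_{|\beta|\leq B_{\mathcal O}}\sup|dL(X^\beta)f|$, combined with Warner~p.\,480 to recognize these as genuine $\CT(G)$-seminorms, is the content of the distribution claim. You flag this verification as ``the principal technical burden'' in your last paragraph, which is the right instinct, but flagging it is not the same as doing it; as written, the proposal does not establish that $\Theta_\pi^s$ is a distribution.
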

\begin{proof}
The fact that $\Tr_s \pi(f)$ can be continued meromorphically  is a consequence of the analytic continuation of
$|x_{k+1} \cdots x_{k+l}|^s$ as a distribution in
$\R^{k+l}$, proved by  Bernshtein-Gel'fand in 
\cite{bernshtein-gelfand69}, Lemma 2. One even has that
\bqn 
\eklm{|x_{k+1}|^{s_1} \cdots |x_{k+l}|^{s_l},u}, \qquad u \in \CT(\R^{k+l}),
\eqn
can be continued meromorphically in the variables $s_1,
\dots, s_l$ to $\C^l$ with poles $s_i=-1,-3, \dots$. To see that \eqref{eq:Theta} is a distribution density, note that $\Theta_\pi^s: \CT(G) \rightarrow \C$ is certainly linear. Since   $|x_{k+1} \cdots x_{k+l}|^s$ is a distribution, for any  open, relatively compact subset  $\omega \subset \R^{k+l}$ there exist $C_\omega>0$ and $B_\omega\in \N$ such that 
\bq
\label{eq:11}
|\eklm{|x_{k+1} \cdots x_{k+l}|^s,u}| \leq C_\omega \sum_{|\beta|\leq B_\omega} \sup|\gd^\beta u|, \qquad u \in \CT(\omega).
\eq
 Let  now $\mathcal{O}\subset G$ be an arbitrary open, relatively compact subset, and  $f \in \CT(\mathcal{O})$.  With equation \eqref{eq:34} one has
\bq
\label{eq:11a}
\Tr_s \pi(f)= \eklm{|x_{k+1} \cdots x_{k+l}|^s,\sum_\gamma
(\alpha_\gamma \circ \phi_{\gamma})\int \tilde a_f^\gamma(\cdot ,\xi)  \dbar \xi}.
%=\sum _\gamma \int _{W_\gamma} \int (\alpha_\gamma \circ \phi_{\gamma})(x) |x_{k+1} \cdots x_{k+l}|^s \tilde a_f^\gamma(x,\xi)  \dbar \xi \, dx.
\eq
By equation (38) of Part I, one computes for arbitrary $N \in \N$ that
\bqn 
e^{i\Psi_\gamma(g,x) \cdot \xi}=\frac1{(1+|\xi|^2)^N} \sum_{r=0}^{2N} \sum_{|\alpha|=r} b_\alpha^N(x,g) dL(X^\alpha) \Big [ e^{i\Psi_\gamma(g,x) \cdot \xi} \Big ],
\eqn
where the coefficients $b_\alpha^N(x,g)$ are smooth, and at most of exponential growth in $g$. With \eqref{eq:auxsym} and Proposition 1 of Part I we  therefore obtain for $\tilde a_f^\gamma(x,\xi)$ the expression
\bqn 
\tilde a_f^\gamma(x,\xi)= \frac1{(1+|\xi|^2)^N}\int_G e^{i\Psi_\gamma(g,x) \cdot \xi} \sum_{r=0}^{2N} \sum_{|\alpha|=r} (-1)^r dL(X^{\tilde \alpha}) \Big [ b_\alpha^N(x,g)c_\gamma(x,g) f(g)  \Big ] d_G(g).
\eqn
Inserting this in \eqref{eq:11a}, and taking  $N$ sufficiently large, we obtain with \eqref{eq:11} that 
\begin{align*}
|\Tr_s \pi(f) | \leq 
%&  \sum _\gamma \int _{W_\gamma}   \int_G  \sum_{r=0}^{2N} \sum_{|\alpha|=r}  \left | (\alpha_\gamma \circ \phi_{\gamma})(x) |x_{k+1} \cdots x_{k+l}|^s  dL(X^{\tilde \alpha}) \Big [ b_\alpha^N(x,g)c_\gamma(x,g) f(g)  \Big ] \right |\\ & \cdot  d_G(g) \, dx\int (1+|\xi|^2)^{-N} \dbar \xi\leq
 C_{\mathcal{O}} \sum_{|\beta| \leq B_{ \mathcal{O}}}  \sup| dL(X^\beta) f|
\end{align*}
for suitable $C_{\mathcal{O}}>0$ and $B_{\mathcal{O}}\in \N$. Since the universal enveloping algebra $\U(\g_\C)$ can be identified with the algebra of invariant differential operators on $G$, the assertion now follows with  \cite{warner72}, page 480.  
\end{proof}

\begin{remark}
Using Hironaka's theorem on resolution of singularities,
Bernshtein-Gel'fand \cite{bernshtein-gelfand69} and
Atiyah \cite{atiyah70} even proved the following 
general result. 
Let $M$ be a real analytic manifold and $f$ a non-zero,
real analytic function on $M$. Then $|f|^s$, which is
locally integrable for $\Re s > 0$, extends analytically
to a distribution on $M$ which is a meromorphic
function of $s$ in the whole complex plane. The poles
are located at the negative rational numbers, and their
order does not exceed the dimension of $M$. From this
one deduces
that if $f : M \rightarrow \C$ is a non-zero analytic
function, then there exists a distribution $S$ on $M$
such that $fS=1$. This is the
H\"{o}rmander-Lojasiewicz theorem on the division of
distributions, and implies the existence of temperate
fundamental solutions for constant-coefficient
differential operators.
\end{remark}

Consider next
the Laurent expansion of $\Theta_\pi^s(f)$ at $s=-1$.
For this, let $u \in \CT(\R^{k+l})$ be a test function,
and consider the expansion
\bqn 
\eklm{|x_{k+1} \cdots x_{k+l}|^s,u }= \sum_{j=-q}^\infty
S_j(u) (s+1)^j,
\eqn
where $S_k \in \D'(\R^{k+l})$. Since $|x_{k+1} \cdots
x_{k+l}|^{s+1}$ has no pole at $s=-1$, we necessarily
must have
\bqn 
|x_{k+1} \cdots x_{k+l}| \cdot S_j =0 \quad \text{for }\,
j<0, \qquad |x_{k+1} \cdots x_{k+l}| \cdot S_0=1
\eqn
as distributions. Therefore $S_0 \in \D'(\R^{k+l})$
represents a distributional inverse of $|x_{k+1} \cdots x_{k+l}|$.
By repeating the reasoning of the proof of Proposition \ref{prop:1} we  arrive at the following
\begin{proposition}
For $f \in \S(G)$,  let  the regularized trace of the operator $\pi(f)$ be defined by
\begin{align*}
\Tr_{reg}\pi(f)&=\left\langle S_0,\sum_{\gamma}
(\alpha_{\gamma}\circ \phi_{\gamma} ) \tilde
A_f^\gamma(\cdot,0) \right\rangle.
\end{align*}
Then $\Theta_\pi: \CT(G) \ni f \mapsto \Tr_{reg} \pi(f) \in \C$  constitutes  a distribution density on  $G$, which is called  the \emph{character of the representation $\pi$}. 
\end{proposition}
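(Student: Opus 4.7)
The plan is to mirror the proof of Proposition \ref{prop:1} verbatim, with $S_0 \in \D'(\R^{k+l})$ taking the role played there by $|x_{k+1}\cdots x_{k+l}|^s$. Well-definedness and linearity of $\Theta_\pi$ in $f$ follow at once from the linearity of $f \mapsto \tilde a_f^\gamma$ in \eqref{eq:auxsym}, of the inverse Fourier transform defining $\tilde A_f^\gamma(\cdot,0)$, and of the distribution $S_0$.

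For continuity, first I would record that, being a distribution on $\R^{k+l}$, $S_0$ satisfies a seminorm bound perfectly analogous to \eqref{eq:11}: for every open, relatively compact $\omega \subset \R^{k+l}$ there exist $C_\omega > 0$ and $B_\omega \in \N$ such that $|\langle S_0, u\rangle| \leq C_\omega \sum_{|\beta|\leq B_\omega} \sup|\gd^\beta u|$ for all $u \in \CT(\omega)$.

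Next, for $\mathcal{O}\subset G$ open and relatively compact and $f \in \CT(\mathcal{O})$, I would rewrite $\tilde A_f^\gamma(x,0) = \int \tilde a_f^\gamma(x,\xi)\,\dbar\xi$ and apply the integration-by-parts identity from equation~(38) of Part~I exactly as in the proof of Proposition \ref{prop:1}, obtaining for any $N \in \N$
\bqn
\tilde a_f^\gamma(x,\xi) = \frac{1}{(1+|\xi|^2)^N}\int_G e^{i\Psi_\gamma(g,x)\cdot\xi} \sum_{r=0}^{2N}\sum_{|\alpha|=r} (-1)^r dL(X^{\tilde \alpha})\big[b_\alpha^N(x,g)\,c_\gamma(x,g)\,f(g)\big]\,d_G(g).
\eqn
Taking $N$ large enough renders the $\xi$-integral absolutely convergent, and differentiating in $x$ up to order $B_\omega$, together with the compactness of the $x$-support of each $\alpha_\gamma \circ \phi_\gamma$ and of $\supp f$, yields a bound of the form
\bqn
\sup_{|\beta|\leq B_\omega}\sup_x \Big|\gd^\beta\Big(\sum_\gamma (\alpha_\gamma \circ \phi_\gamma)(x)\,\tilde A_f^\gamma(x,0)\Big)\Big| \leq C'_{\mathcal{O}} \sum_{|\beta'|\leq B'_{\mathcal{O}}} \sup |dL(X^{\beta'}) f|.
\eqn
Inserting this into the seminorm bound for $S_0$ gives $|\Tr_{reg}\pi(f)| \leq C_{\mathcal{O}} \sum_{|\beta|\leq B_{\mathcal{O}}} \sup|dL(X^\beta) f|$; identifying $\U(\g_\C)$ with the algebra of left-invariant differential operators on $G$ as in \cite{warner72}, page~480, this is precisely the continuity estimate characterizing a distribution density on $G$.

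The main technical point, as in Proposition \ref{prop:1}, is coordinating the seminorm order $B_\omega$ of $S_0$ with the integration-by-parts exponent $N$: one must verify that $x$-derivatives of $\tilde A_f^\gamma(\cdot,0)$ up to order $B_\omega$ remain controllable by finitely many $G$-seminorms of $f$. Since each extra $x$-derivative merely brings down polynomial factors in $\xi$ under the integral sign, choosing $N$ sufficiently large (depending on $B_\omega$) suffices. Beyond this bookkeeping, no new ideas beyond those already appearing in the proof for $\Theta_\pi^s$ are needed.
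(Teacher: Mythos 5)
Your proposal is correct and follows exactly the route the paper intends: the paper itself gives no separate argument here, stating only that the result follows "by repeating the reasoning of the proof of Proposition 1," with the distribution $S_0$ replacing $|x_{k+1}\cdots x_{k+l}|^s$ in the seminorm estimate \eqref{eq:11}, and you have spelled out precisely that substitution together with the integration-by-parts bookkeeping it requires.
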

\qed

\begin{remark}
An alternative definition of $\Tr_{reg} \pi(f)$ could be given within the calculus of b-pseudodifferential operators developed by Melrose. For a detailed description, the reader is referred to  \cite{loya98}, Section 6.
\end{remark}

{In what follows, we shall identify  distributions with distribution densities on $G$ via the Haar measure $d_G$.} Our next aim is to understand the distributions $\Theta^s_\pi$ and $\Theta_\pi$ in terms of the $G$-action on $\widetilde \X$. We shall actually show that on a certain open set of transversal elements, they are represented by locally integrable functions  given in terms of fixed points. Similar expressions where derived by Atiyah and Bott  for the global character of an induced representation of $G$. Their work is based on the concept of  transversal trace of a pseudodifferential operator, and  will be explained in the next section.

\section{Transversal trace and characters of induced representations}
\label{sec:AB}

In \cite{atiyah-bott67}, Atiyah and Bott extended the classical Lefschetz fixed point theorem to geometric endomorphisms on elliptic complexes. Their work relies on the concept of transversal trace of a smooth operator, and its extension by continuity to pseudodifferential operators. The Lefschetz theorem then follows by showing that the  Lefschetz number of a geometric endomorphism is given by an alternating sum of transversal traces, and extending an analogous alternating sum formula for smooth endomorphisms. To explain the notion of 
 transversal trace of a pseudodifferential operator, let us introduce the following 

\begin{definition}
Let $M$ be a smooth manifold. A fixed point $x_0$ of a
smooth map $f: M \rightarrow M$ is said to be
\emph{simple} if $\det(\1-df_{x_0}) \neq 0$, where
$df_{x_0}$ denotes the differential of $f$ at $x_0$. The map $f$ is called \emph{transversal} if
it has only simple fixed points. 
\end{definition}
Note that the non-vanishing
condition on the determinant is equivalent to the
requirement that the graph of $f$ intersects the diagonal
transversally at $(x_0,x_0) \in M \times M$, 
and hence the terminology.  In particular,  a
simple fixed point is an isolated fixed point.
 Let now $U$ be an open subset of
$\R^n$, $V$ open in $U$, and consider a smooth map
$\alpha:V\rightarrow U$ with a simple fixed point at
$x_0$. We choose $V$  so small, that $x\mapsto
x-\alpha(x)$ defines a diffeomorphism of $V$ onto its
image. Let $\Lambda:V \rightarrow U \times U$ be the map
$\Lambda(x)=(\alpha(x),x)$, and assume that $A \in
\L^{-\infty}(U)$ is a smooth operator with symbol $a(x,\xi)$. The kernel $K_A$ of $A$ is a smooth function
on $U\times U$, and its restriction
$\Lambda^\ast K_A$ to the graph of $\alpha$ defines a
distribution on $V$ according to
\begin{align}
\label{J}
\begin{split}
\eklm{\Lambda^\ast K_A, v} &=\int \int
e^{i(\alpha(x)-x)\cdot \xi }a(\alpha(x),\xi) v(x) \, \dbar \xi
\, dx\\ &=\int \int e^{-iy\cdot \xi} \frac {
a(\alpha(x(y)), \xi) v (x(y))}{|\det (\1 -d\alpha
(x(y)))|} \, dy \, \dbar \xi, \qquad v \in \CT(V),
\end{split}
\end{align}
where we made the substitution $y=x-\alpha(x)$, and the
change in order of integration is permissible
because $ a(x,\xi) \in \Syms(U)$. Now, for  $a(x,\xi) \in \Sym^l(U)$, we observe that by differentiating 
$$\int e^{-iy\cdot \xi} a(\alpha(x(y)), \xi)\frac {v (x(y))}{|\det (\1 -d\alpha
(x(y)))|} dy $$ with respect to $\xi$, and integrating by parts with respect to $y$, we obtain the estimate 
\bqn
|\partial_\xi^\gamma \int e^{-iy\cdot \xi} a(\alpha(x(y)), \xi)\frac {v (x(y))}{\left|\det (\1 -d\alpha(x(y)))\right|} dy | \leq C \eklm{\xi}^{l-|\beta|}
\eqn
for arbitrary multi-indices $\gamma$ and $\beta$ and some constant $C>0$. Thus,  as an oscillatory integral, the last expression in \eqref{J} defines a
distribution on $V$ for any $a(x,\xi) \in \Sym^l(U)$. The distribution $\Lambda^\ast K_A$ is called \emph{the
transversal trace of $A\in \L^l(U)$}. If, in particular,
$a(x,\xi)=a(x)$ is a polynomial of degree zero in $\xi$,
one computes that 
\begin{equation}
\label{eq:12}
\Lambda^\ast K_A=\frac{ a(x_0) \delta_{x_0}}{|\det ( 1
-d\alpha (x_0))|}.
\end{equation}
This diccussion can be globalized. Let ${\bf X}$ be a smooth manifold, $E$ a vector bundle over ${\bf X}$, $\alpha:{\bf X} \rightarrow {\bf X}$ a $\Cinft$-map with only simple fixed points, and
\begin{displaymath}
  A:\Gamma_c(\alpha^\ast E) \longrightarrow \Gamma(E)
\end{displaymath}
a pseudodifferential operator of order $l$ between smooth sections. Denote the density bundle on ${\bf X}$ by $\Omega$, put  $F=\alpha^\ast E$, and define $F'=F^\ast \otimes \Omega$. The kernel $K_A$ is then a distributional section of $E\boxtimes F'$. In other words, $K_A \in \D'(E \boxtimes F')=\D'({\bf X}\times {\bf X}, E \boxtimes F')$. 
 Similarly, one has $K_{\alpha^\ast A} \in \D'({\bf X}\times {\bf X}, F\boxtimes F')$, where $\alpha^\ast A$ denotes the composition
\begin{displaymath}
  \alpha^\ast A:\Gamma_c(F)\stackrel{A}{\longrightarrow} \Gamma(E)\stackrel{\alpha^\ast}{\longrightarrow} \Gamma(F).
\end{displaymath}
 If $A \in \L^{-\infty}(F,E)$,  $K_A$ is a smooth section on ${\bf X}\times {\bf X}$, and $K_A(x,y) \in E_x\otimes F_y'$. In this case, $
  K_{\alpha^\ast A} (x,y) =K_A(\alpha(x),y),
$
so that one deduces $K_{\alpha^\ast A}(x,x)\in E_{\alpha(x)}\otimes F'_x=F_x \otimes (F^\ast \otimes \Omega)_x \simeq \Lcal(F_x,F_x)\otimes \Omega_x$. As a consequence, $\Tr  K_{\alpha^\ast A}(x,x)$ becomes a section of $\Omega$, where $\Tr$ denotes the bundle homomorphism
\begin{equation}
\label{K}
  \Tr: F\otimes F' \longrightarrow \Omega. 
\end{equation}
Hence, if $\bf X$ is compact,   one can define the trace of $\alpha^\ast A$ as 
\begin{equation*}
  \Tr \alpha^\ast A=\int _{\bf X}  \Tr K_{\alpha^\ast A}(x,x).
\end{equation*}
This trace can be extended to arbitrary $A \in \L^l({\bf X})$. Indeed, let $\Delta$ be the diagonal in ${\bf X}\times {\bf X}$, and denote the canonical isomorphism $\Delta \simeq {\bf X}$ also by $\Delta$.  The foregoing local considerations  imply that the map $\Theta: \Lcal(\E'(F),\Gamma(E)) \rightarrow \Gamma(F\otimes F')$ given by $A \mapsto \Delta^\ast K_{\alpha^\ast A}=K_{\alpha^\ast A}(x,x)$ has an extension
  \begin{equation*}
    \Theta:\L^l(F,E) \longrightarrow \D'(F\otimes F')
  \end{equation*}
which is continuous with respect to the strong operator topology  on bounded sets of $\L^l(F,E)$, see \cite{atiyah-bott67}, Proposition 5.3. Since the bundle homomorphism \eqref{K} induces continuous linear maps
\begin{equation*}
  \Tr:\Gamma (F\otimes F') \longrightarrow \Gamma(\Omega), \qquad \Tr: \D'(F\otimes F') \longrightarrow \D'(\Omega), 
\end{equation*}
where $\D'(\Omega)=\D'({\bf X},\Omega)=\Gamma_c(\Omega^\ast \otimes \Omega)'=\Gamma_c(1)'=\CT({\bf X})'$  is the space of distribution densities on $\bf X$, we see  that $\Tr \Theta(A)$ can be defined for any $A \in L^l(F,E)$ in a unique way.  Consequently, for compact $\bf X$, the map $\L^{-\infty}(F,E) \rightarrow \C, A \to \Tr \alpha^\ast A$ has a unique continuous extension
  \begin{equation*}
    \Tr_\alpha:\L^l(F,E) \longrightarrow \C, \qquad A \mapsto \Tr_\alpha A=\eklm{\Tr \Theta(A),1},
  \end{equation*}
  called the \emph{transversal trace of $A$}. In the case that $A$ is induced by a bundle homomorphism $\phi$, it follows from \eqref{eq:12} that 
  \bq
  \label{eq:M}
  \Tr_\alpha A=\sum_{x \in \Fix(\alpha)}\nu_x(A), \qquad 
  \nu_x(A)= \frac{\Tr \phi_x}{|\det(\1-d\alpha(x))|},
  \eq
  the sum being over the fixed points of $\alpha$ on $\bf X$,
  see \cite{atiyah-bott67}, Corollary 5.4. 
  
 In the context of representation theory, this trace was employed by Atiyah and Bott in \cite{atiyah-bott68} to compute the global character of an induced representation.  
  Thus, let $G$ be a Lie group,  $H$ a closed subgroup of $G$, and  $\rho$ a representation of $H$ on a finite dimensional vector space $V$. The representation of $G$ induced by $\rho$ is  a geometric endomorphism in the space of sections over $G/H$ with values in the homogeneous vector bundle $G\times_HV$, and shall be denoted by $T(g)=(\iota_\ast \rho)(g)$. Assume that  $G/H$ is compact,  and let $d_G$ be a Haar measure on $G$. Consider a compactly supported smooth function  $f\in \CT(G)$, and the corresponding convolution operator $T(f)=\int _G f(g) T(g) d_G(g)$. It  is a smooth operator, and, since $G/H$ is compact,  has a well defined trace. Consequently, the map  
 \bqn 
 \Theta_T:\CT(G) \ni f \longmapsto \Tr T(f)\in \C
 \eqn
  defines a distribution on $G$ called the \emph{distribution character   of the induced representation $T$}. On the other hand, assume that   $g\in G$ is such that $l_{g^{-1}}:G/H\rightarrow G/H, xH \mapsto g^{-1} xH$, has only simple fixed points. In this case, a transversal trace     $\Tr^\flat T(g)$ of  $T(g) $ can be defined according to 
  \bqn 
  \Tr^\flat T(g)=\Tr_{l_{g^{-1}}}(\Gamma(\phi_g)),
  \eqn  
  where $\phi_g: l_{g^{-1}}^\ast (G\times _HV) \rightarrow G\times_HV$ is the endomorphism associated to $T(g)$ such that 
 \bqn 
 T(g)= \phi_g \circ l_{g^{-1}}^\ast,
 \eqn
 and $\Gamma(\phi_g): \Gamma(l_{g^{-1}}^\ast (G\times _HV)) \rightarrow \Gamma(G\times_HV)$. $  \Tr^\flat T(g)$ is given by a sum over fixed points of $g$, and one can show that, on an open set $G_T\subset G$,
\begin{equation}
\label{IV}
 \Theta_T(f)=\int_{G_T} f(g) \Tr^\flat T(g) d_G(g), \qquad f \in \CT(G_T).
\end{equation}
Thus, the distribution character of a parabolically induced representation of a Lie group $G$  is represented on $G_T$ by the transversal trace of the corresponding geometric endomorphism. If $G$ is compact, the Lefschetz theorem reduces to the Hermann--Weyl formula by the theory of Borel and Weil. It can be interpreted as expressing the character of a finite dimensional representation as an alternating sum of characters of infinite dimensional representations. 
In what follows, we shall prove similar formulae for the distributions $\Theta_\pi$ and $\Theta_\pi^s$ defined in the previous section, after reviewing some largely known facts about group actions on homogeneous spaces.

\section{Fixed point actions on homogeneous spaces }
\label{Sec:4}

Let $G$ be a Lie group with Lie algebra $\g$,  $H
\subset G$  a closed subgroup with Lie algebra $\h$, and $\pi:G\rightarrow G/H$ the canonical projection. For an element $g \in
G$, consider the natural left action  $l_{g}:G/H \rightarrow G/H$  given by $l_{g}(xH)=gxH$. Let $\Ad^G$ denote the adjoint action of $G$ on
$\g$. We begin with two well-known lemmata, see e.g. \cite{atiyah-bott68}, page 463.
\begin{lemma}
\label{lemma 1}
$l_{g^{-1}}:G/H \rightarrow G/H$ has a fixed point if
and only if $g \in \bigcup_{x \in G}xHx^{-1}$. Moreover, to every fixed point $xH$ one can associate
a unique conjugacy class $h(g,xH)$ in $H$.
\end{lemma}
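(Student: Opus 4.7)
The statement splits into two independent assertions, both of which should follow from direct coset manipulations in $G/H$; no deep input is required beyond the definition of the left action $l_{g^{-1}}(xH)=g^{-1}xH$. The plan is therefore to first unravel the fixed point condition into a conjugacy statement, and then to show that a canonical element of $H$ attached to a fixed point is well-defined up to $H$-conjugation.

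For the first assertion I would argue by a chain of equivalences. The coset $xH$ is a fixed point of $l_{g^{-1}}$ if and only if $g^{-1}xH=xH$, i.e.\ $x^{-1}g^{-1}x\in H$, which is equivalent to $x^{-1}gx\in H$ and hence to $g\in xHx^{-1}$. Taking the union over all $x\in G$ yields the first claim. Conversely, starting from $g=xhx^{-1}$ with $h\in H$, the computation $g^{-1}xH=xh^{-1}x^{-1}\cdot xH=xh^{-1}H=xH$ shows that $xH$ is indeed fixed, so the implication is in fact reversible.

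For the second assertion I would set, for any representative $x$ of a fixed point $xH$,
\begin{equation*}
h(g,x):=x^{-1}gx\in H,
\end{equation*}
which lies in $H$ by the first part. To check that only the $H$-conjugacy class of this element depends on the coset, let $x'=xh_0$ be another representative with $h_0\in H$; then
\begin{equation*}
h(g,x')=(xh_0)^{-1}g(xh_0)=h_0^{-1}(x^{-1}gx)h_0=h_0^{-1}h(g,x)h_0,
\end{equation*}
which is conjugate to $h(g,x)$ inside $H$. Hence the conjugacy class $h(g,xH)$ is well-defined, completing the proof.

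I do not anticipate any genuine obstacle: the lemma is a bookkeeping statement about left cosets, and the only thing that requires a small check is the invariance under change of representative in the second part, which is immediate from the calculation above. The real content of this bookkeeping will only surface in the subsequent application, where $h(g,xH)$ is used to translate information about fixed points on $G/H$ into statements about the conjugacy structure of $H$.
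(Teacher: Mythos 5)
Your proof is correct and follows essentially the same chain of coset manipulations as the paper: unwind $l_{g^{-1}}(xH)=xH$ to $x^{-1}gx\in H$, then check that changing the representative $x\mapsto xh_0$ conjugates $h(g,x)$ by $h_0$. The explicit verification of the converse direction is redundant (the chain of equivalences already gives it), but otherwise the two arguments coincide.
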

\begin{proof}
Clearly, 
$$l_{g^{-1}}(xH)=xH \Longleftrightarrow g^{-1}xH=xH
\Longleftrightarrow (g^{-1}x)^{-1}x \in H \Longleftrightarrow
x^{-1}gx=h(g,x),$$
 where $h(g,x) \in H$. So $l_{g^{-1}}$
has a fixed point $xH$ if, and only if, $g \in \bigcup_{x \in G}xHx^{-1}$. Now, if $y \in G$ is 
such that $xH=yH$, then $y=xh$ for some $h \in H$. This
gives us that
$h(g,y)=y^{-1}gy=(xh)^{-1}g(xh)=h^{-1}(x^{-1}gx)h=h^{-1}h(g,x)h$.
Thus, as $x$ varies over representatives of the coset $xH$, $h(g,x)$ varies over a conjugacy
class $h(g,xH)$ in $H$.
\end{proof}
 \begin{lemma}
\label{prop:transversality}
Let $xH$ be a fixed point of $l_{g^{-1}}$ and let $h \in
h(g,xH)$. Then
$$
\det (\1 - dl_{g^{-1}})_{xH}=\det(\1-\Ad^G_H(h)),
$$
where $\Ad^G_H : H\rightarrow \mathrm{Aut}(\g / \h) $ is the
isotropy action of H on $\g / \h$.
\end{lemma}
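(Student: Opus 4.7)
My plan is to reduce the calculation at the arbitrary fixed point $xH$ to one at the identity coset $eH$ by means of left translation by $x$, at which point the differential is nothing but the infinitesimal form of the isotropy representation of $H$ on $\g/\h$. First, I would fix the representative $h = x^{-1}gx \in h(g,xH)$ and rewrite the fixed-point condition as $g^{-1}x = xh^{-1}$. Introducing the diffeomorphism $L_x: G/H \to G/H$ given by $yH \mapsto xyH$, which maps $eH$ to $xH$, a direct manipulation using the fixed-point relation produces the conjugation identity
\begin{equation*}
l_{g^{-1}} \circ L_x = L_x \circ l_{h^{-1}}.
\end{equation*}
Upon differentiating at $eH$, this shows that $dl_{g^{-1}}|_{xH}$ is similar, via the isomorphism $dL_x|_{eH}: T_{eH}(G/H) \to T_{xH}(G/H)$, to $dl_{h^{-1}}|_{eH}$. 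In particular, the two endomorphisms share the same characteristic polynomial and hence the same value of $\det(\1 - \cdot\,)$.

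The second step is to identify $dl_{h^{-1}}|_{eH}$ on $T_{eH}(G/H) \cong \g/\h$ with the induced isotropy action. Using the canonical identification $[X] \leftrightarrow \frac{d}{dt}|_{t=0} \exp(tX)H$ together with the standard identity $h\exp(tX)h^{-1} = \exp(t\,\Ad(h)X)$, one computes directly that
\begin{equation*}
l_h(\exp(tX)H) = h\exp(tX)H = \exp(t\,\Ad(h)X)\,hH = \exp(t\,\Ad(h)X)H,
\end{equation*}
since $hH = H$. This shows that $dl_h|_{eH}$ coincides with the operator $\Ad^G_H(h)$ induced by $\Ad(h)$ on $\g/\h$, the descent to the quotient being justified by $\Ad(h)\h \subset \h$ for $h \in H$. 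Applying this with $h$ replaced by $h^{-1}$ and combining with the first step yields the desired identity for $\det(\1 - dl_{g^{-1}}|_{xH})$ in terms of the adjoint action on $\g/\h$.

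The argument contains no substantive analytical obstacle; the only point requiring care is the bookkeeping of the conventions governing $g \leftrightarrow g^{-1}$ and $h \leftrightarrow h^{-1}$, to ensure that the correct element of the conjugacy class $h(g,xH)$ appears in the final formula. The entire content of the lemma is concentrated in the conjugation identity of the first step, which transports the linearization at an arbitrary fixed point back to $eH$; once this is in hand, the result follows from an immediate computation with the adjoint action.
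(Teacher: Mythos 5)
Your proof is correct and follows essentially the same route as the paper: conjugate by the translation carrying $eH$ to $xH$ to reduce the linearization of $l_{g^{-1}}$ at $xH$ to that of an $H$-translation at $eH$, then identify the latter with the isotropy action on $\g/\h$; the paper phrases this through the projection $\pi\colon G\to G/H$ and the relations $\pi\circ L_{g^{-1}}\circ R_{h^{-1}}=l_{g^{-1}}\circ\pi$ and $L_{g^{-1}}\circ R_{h^{-1}}\circ L_x=L_x\circ L_h\circ R_{h^{-1}}$, which after descending by $\pi$ collapse to exactly your conjugation identity. Your caution about the $h\leftrightarrow h^{-1}$ bookkeeping is well placed — the paper's proof itself picks $h$ with $g^{-1}x=xh$, i.e.\ $h=x^{-1}g^{-1}x$, which is the inverse of the representative $x^{-1}gx$ appearing in Lemma~\ref{lemma 1}, so the element fed into $\Ad^G_H$ matches what you denote $h^{-1}$; this is a shared convention glitch that washes out in the applications, where only $|\det(\1-\Ad^G_H(h))|$ enters.
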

\begin{proof}
Let $L_g$ and $R_g$ be the left and right translations, respectively,  of $g\in G$ on $G$. 
We begin with the  observation that 
\bq
\label{eq:2}
\pi\circ L_{g^{-1}}=l_{g^{-1}}\circ \pi,
\eq
where $\pi$ is the natural map from $G$ to $G/H$. Let $e$ be the identity in $G$, and 
$T_{\pi(e)}(G/H)$  the tangent space to $G/H$ at
the point $\pi(e)$. The
derivative $d\pi : \g \rightarrow T_{\pi(e)}(G/H)$ is a
surjective linear map with kernel $\h$, and therefore induces an
isomorphism between $\g / \h$ and $T_{\pi(e)}(G/H)$,
which we shall again denote by $d\pi$. Notice also that, for $h \in H$, $\Ad^G(h)$ leaves $\h$
invariant and so induces a map $\Ad^G_H(h):\g / \h
\rightarrow \g / \h$.
Now, let $xH$ be a fixed point of $l_{g^{-1}}$,  and take 
$h \in h(g,xH)$. Choose $x$ in the coset $xH$ such that
$g^{-1}x=xh$. For  $y \in G$ one computes  
$$
(\pi \circ
L_{g^{-1}} \circ R_{h^{-1}})(y)=\pi
(g^{-1}yh^{-1})=g^{-1}yH=l_{g^{-1}}(yH)=(l_{g^{-1}}
\circ \pi) (y),
$$ 
so that
\bq
\label{eq:3}
\pi \circ L_{g^{-1}} \circ R_{h^{-1}}=l_{g^{-1}} \circ
\pi.
\eq
Observe, additionally, that $L_{g^{-1}} \circ R_{h^{-1}}$ fixes
$x$. We therefore see that
$L_{g^{-1}} \circ R_{h^{-1}} \circ L_x=L_x \circ L_h
\circ R_{h^{-1}}$, which, together with equations
\eqref{eq:2} and \eqref{eq:3}, leads us to 
\bq
l_x \circ \pi \circ L_h \circ R_{h^{-1}} = l_{g^{-1}}
\circ l_x \circ \pi.
\eq
Differentiating this, and using the identification $dl_x
\circ d\pi : \g / \h\rightarrow T_{\pi(x)}(G/H)$, we
obtain the commutative diagram
$$\begin{CD}
  \g / \h @>{\Ad^G_H(h)}>> \g / \h \\
    @V{dl_x \circ d\pi}VV                     @V{dl_x\circ d \pi}VV \\
  T_{\pi(x)}(G/H)   @>{dl_{g^{-1}}}>> T_{\pi(x)}(G/H) 
\end{CD}$$
 thus proving the lemma.
\end{proof}

Consider now the case when $G$ is a connected, real, semi-simple Lie group with finite centre, $\theta$ a Cartan involution of $\g$, and $\g=\k\oplus\p$ the corresponding Cartan decomposition. Further, let  $K$ be
 the maximal compact subgroup of $G$ associated to $\k$,  and consider the corresponding Riemannian symmetric space  $\X=G/K$ which is assumed to be of non-compact type. By definition, $\theta$ is an involutive automorphism of $\g$ such
that the bilinear form $\langle \cdot , \cdot \rangle _{\theta}$ is
strictly positive definite. In particular, 
$\langle\cdot,\cdot\rangle _{\theta|\p\times\p}$ is a
symmetric, positive-definite, bilinear form, yielding a left-invariant  metric on $G/K$. 
Endowed with this metric, $G/K$ becomes a complete, simply connected,
Riemannian manifold with non-positive sectional
curvature. Such manifolds are called \emph{Hadamard manifolds}. 
 Furthermore, for each $g \in G$, $l_{g^{-1}}:G/K \rightarrow G/K$ is an isometry on $G/K$ with respect to this left-invariant metric. Note that  Riemannian symmetric spaces of {non-compact type} are
precisely the simply connected Riemannian symmetric
spaces with sectional curvature $\kappa \leq 0$ and with
no Euclidean de Rham factor. We then  have the following

\begin{lemma}
Let  $g \in G$ be such that $l_{g^{-1}}:G/K \rightarrow G/K$ is transversal.
Then  $l_{g^{-1}}$ has a
unique fixed point in $G/K$.
\end{lemma}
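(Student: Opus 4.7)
The plan is to use that $\X=G/K$, endowed with the metric induced by $\langle\cdot,\cdot\rangle_\theta|_{\p\times\p}$, is a Hadamard manifold (complete, simply connected, non-positively curved) on which $l_{g^{-1}}$ acts as an isometry, and to handle existence and uniqueness of the fixed point separately.

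For uniqueness I would argue by contradiction. Suppose $x_1K\neq x_2K$ are two distinct fixed points of $l_{g^{-1}}$. By the Cartan--Hadamard theorem they are joined by a unique minimizing geodesic $\gamma:[0,1]\to\X$, and since $l_{g^{-1}}$ is an isometry fixing both endpoints, $l_{g^{-1}}\circ\gamma$ is another minimizing geodesic with the same endpoints and speed, so $l_{g^{-1}}\circ\gamma=\gamma$ pointwise. Differentiating gives $dl_{g^{-1}}|_{\gamma(t)}\gamma'(t)=\gamma'(t)$, exhibiting the non-zero tangent vector $\gamma'(t)$ as a $1$-eigenvector of $dl_{g^{-1}}|_{\gamma(t)}$. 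Consequently $\det(\1-dl_{g^{-1}}|_{x_iK})=0$, contradicting the simplicity of the fixed points.

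For existence I would invoke the Jordan decomposition $g=g_eg_hg_u$ in $G$ as a commuting product of its elliptic, hyperbolic and unipotent factors. If $l_{g^{-1}}$ has any fixed point, Lemma \ref{lemma 1} gives $g\in xKx^{-1}$ for some $x\in G$; since the compact group $K$ contains no non-trivial hyperbolic or unipotent elements (their powers would be unbounded, contradicting compactness of $K$), conjugation-invariance and uniqueness of the Jordan decomposition force $g_h=g_u=e$. Conversely, if $g_hg_u\neq e$ then $g$ is conjugate to no element of $K$, Lemma \ref{lemma 1} yields no fixed point, and the transversality hypothesis is only vacuously satisfied. In the non-vacuous regime in which the lemma is invoked, $g=g_e$ is therefore conjugate to an element of $K$, and Lemma \ref{lemma 1} directly supplies a fixed point.

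The main obstacle is the existence step: the transversality condition is purely local at fixed points and does not on its own exclude fixed-point-free isometries such as translations along an axis in $\X$. The Jordan decomposition argument, combined with the semisimplicity of $G$ and the non-compact-type hypothesis, is what forces $g$ to be elliptic whenever transversality has substantive content; uniqueness then follows cleanly from Hadamard geometry and the uniqueness of minimizing geodesics.
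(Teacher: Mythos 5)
Your uniqueness argument is essentially the paper's own: pass to the Hadamard manifold $G/K$, observe that the isometry $l_{g^{-1}}$ would fix the unique geodesic between two distinct fixed points pointwise, and derive a contradiction. You push this one step further by noting that $\gamma'(t)$ is a $1$-eigenvector of $dl_{g^{-1}}$, so $\det(\1-dl_{g^{-1}})=0$ at the endpoints, directly contradicting simplicity. The paper instead stops at ``the fixed points are not isolated,'' which is the weaker consequence of transversality noted just after the definition; both are correct, and yours is marginally more self-contained.

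The existence paragraph, however, does not do what you want it to. The paper's proof makes no attempt to establish existence, and for good reason: a hyperbolic element $g$ acts on $G/K$ with \emph{no} fixed points, so $l_{g^{-1}}$ is (vacuously) transversal but not conjugate into $K$. The lemma therefore cannot mean ``exactly one fixed point'' and is being used in the sense ``at most one.'' Your Jordan-decomposition discussion correctly classifies when a fixed point exists ($g$ elliptic iff $g$ conjugate into $K$ iff $l_{g^{-1}}$ has a fixed point), but this is a restatement of Lemma~\ref{lemma 1}, not a proof that transversality forces existence; indeed you concede this yourself by restricting to the ``non-vacuous regime,'' which makes the argument circular. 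So this paragraph, while factually correct, proves nothing beyond what Lemma~\ref{lemma 1} already gives, and should be dropped; the lemma as the paper uses it is purely a uniqueness statement.
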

\begin{proof}
Let $M$ be a Hadamard manifold, and $\phi$ an isometry on $M$ that leaves two distinct points  $x,y \in M$ fixed. By general theory, there is a unique minimal geodesic $\gamma:\R \rightarrow M$ joining $x$ and $y$. Let  $\gamma(0)=x$ and  $\gamma(1)=y$, so that  $\phi \circ \gamma(0)=\phi (x)=x$ and
$\phi \circ \gamma(1)=\phi(y)=y$. Since isometries
take geodesics to geodesics, $\phi \circ \gamma$ is
a geodesic in $M$, joining  $x$ and $y$.
By the uniqueness of $\gamma$ we  therefore conclude that  $\phi \circ
\gamma=\gamma$. This means that  an isometry on a Hadamard
manifold with two distinct fixed points also fixes the
unique geodesic joining them point by point. Since, by assumption, $l_{g^{-1}}:G/K \rightarrow G/K$ has  only isolated fixed points, the lemma follows. 
\end{proof}
 In what follows, we shall call an element $g \in G$ \emph{transversal relative to  a closed subgroup $H$} if $l_{g^{-1}}:G/H \rightarrow G/H$ is transversal, and  denote the set of all such elements by $G(H)$. 
\begin{proposition}
Let $G$ be a connected, real, semi-simple Lie group with finite centre, and  $K$  a maximal compact subgroup of $G$.  Suppose $\rank(G)=\rank(K)$. Then any regular element of $G$ is transversal relative to $K$. In other words,  $G' \subset G(K)$, where $G'$ denotes the set of regular elements in $G$.
\end{proposition}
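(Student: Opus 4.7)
The plan is to combine Lemma 3 with a short dimension count that uses the equal-rank hypothesis, applied separately at each fixed point of $l_{g^{-1}}$.

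First I would take $g \in G'$ and assume $xK \in G/K$ is a fixed point of $l_{g^{-1}}$; by Lemma 1 the associated conjugacy class $h(g,xK)$ contains the representative $h = x^{-1}gx \in K$. Being $G$-conjugate to the regular element $g$, the element $h$ is itself regular in $G$, so the centralizer satisfies $\dim \g^h = \rank G$. By Lemma 3, simplicity of the fixed point $xK$ is then equivalent to
\[
\det(\1 - \Ad^G_K(h)) \neq 0.
\]
The fact that $h \in K$, so $\theta(h)=h$, means $\Ad(h)$ preserves the Cartan decomposition $\g = \k \oplus \p$; this identifies $\g/\k$ with $\p$ and the isotropy action with $\Ad(h)|_\p$, reducing the task to showing $\g^h \cap \p = 0$.

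Here is where the hypothesis $\rank G = \rank K$ enters. The global Cartan decomposition $G \cong K \times \p$, together with connectedness of $G$, forces $K$ to be connected, so $h$ lies in some maximal torus $T \subset K$ with Lie algebra $\mathfrak{t} \subset \k$ of dimension $\rank K$. Abelianness of $T$ yields $\Ad(h)X = X$ for every $X \in \mathfrak{t}$, i.e. $\mathfrak{t} \subset \g^h$, and the dimension chain
\[
\dim \mathfrak{t} = \rank K = \rank G = \dim \g^h
\]
forces the equality $\g^h = \mathfrak{t} \subset \k$. Hence $\g^h \cap \p = 0$, so $xK$ is a simple fixed point. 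Since the fixed point was arbitrary, this shows $g \in G(K)$.

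I do not foresee any real obstacle: the proof is essentially a dimension count, and the only nontrivial ingredient is that $h$ sits inside a maximal torus of $K$, which follows from the connectedness of $K$. The delicate point is that one needs both the regularity of $h$ (to pin down $\dim \g^h$) and the equal-rank hypothesis in order to upgrade the inclusion $\mathfrak{t} \subset \g^h$ to an equality, after which the conclusion $\g^h \cap \p = 0$ is immediate.
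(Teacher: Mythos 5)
Your proof is correct and follows essentially the same route as the paper: conjugate $g$ to an element $h\in K$, place $h$ in a maximal torus $T$ of $K$, invoke the equal-rank hypothesis, and conclude via Lemma~2 that each fixed point is simple. The only substantive difference is in how one sees that the centralizer $\g^h$ equals $\mathfrak{t}$: the paper cites Harish--Chandra's conjugacy theorem together with Warner's Proposition~1.4.2.3 to get $\det(\1-\Ad^G_T(h))\neq 0$ and then passes (without elaboration) to $\det(\1-\Ad^G_K(h))\neq 0$, whereas your dimension count $\dim\mathfrak{t}=\rank K=\rank G=\dim\g^h$, combined with the $\theta$-stability of $\Ad(h)$, makes that last step explicit; both arguments carry the same content.
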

\begin{proof}
 If a regular element $g$ is such that $l_{g^{-1}}:G/K \rightarrow G/K$ has no fixed points, it is of course transversal. Let, therefore, $g \in G'$ be such that $l_{g^{-1}}$ has a fixed point $x_0K$. By Lemma \ref{lemma 1}, $g$ must be conjugate to an element $k(g,x_0)$ in $K$. Consider now   a maximal family of mutually non-conjugate Cartan subgroups  $J_1, \dots, J_r$ in $G$, and put $J_i'=J_i \cap G'$ for $i \in \{1,\dots,r\}$.  A result of Harish Chandra then implies  that 
\bqn
G'=\bigcup_{i=1}^r \bigcup_{x \in G} x\, J'_i\, x^{-1},
\eqn 
see \cite{warner72}, Theorem 1.4.1.7. 
From this we  deduce that 
\bqn 
g=xk(g,x_0)x^{-1}=yjy^{-1} \quad \text{ for some $x,y \in G, j\in J'_i$ for some $i$}.
\eqn
Hence, $k(g,x_0)$ must be regular. Now, let $T$ be a maximal torus of $K$. It is a Cartan subgroup of $K$, and the assumption that $\rank(G)=\rank(K)$ implies that  that $T$ is also Cartan in $G$.  Let  $k(g,x_0K)$ be the conjugacy class in $K$ associated to $x_0K$, as in Lemma \ref{lemma 1}.
As $K$ is compact, the maximal torus $T$  intersects every conjugacy class in $K$. Varying  $x_0$ over the coset $x_0K$,   we can therefore assume that  $k(g,x_0) \in k(g,x_0K) \cap T$. Thus, we conclude  that $k(g,x_0 ) \in T \cap G'$. Note that, in particular, we can choose $J_i=T$ by the maximality of the $J_1,\dots,J_r$. Now,  for a regular element $h \in G$ belonging to a Cartan subgroup $H$ one necessarily has $\det(\1-\Ad^G_H(h))\neq 0$, compare the  proof of Proposition 1.4.2.3 in  \cite{warner72}. Therefore $\det(\1-\Ad^G_T(k(g,x_0)))\neq 0$, and consequently, $\det(\1-\Ad^G_K(k(g,x_0)))\neq 0$. The assertion of the proposition now follows from Lemma \ref{prop:transversality}.
\end{proof}

\begin{corollary}
\label{cor:1}
Let $G$ be a connected, real, semi-simple Lie group with finite centre,   $K$  a maximal compact subgroup of $G$, and suppose that $\rank\, (G)=\rank\, (K)$. Then the set of transversal elements $G(K)$  is open and dense in $G$.
\end{corollary}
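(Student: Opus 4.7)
The plan is to read the corollary off the preceding proposition together with a classical density result. The proposition establishes the inclusion $G' \subseteq G(K)$, where $G' \subset G$ denotes the set of regular elements. By a standard theorem (see \cite{warner72}, Proposition 1.4.2.3), $G'$ is open and dense in any connected real semisimple Lie group with finite center, being the complement of the closed analytic subvariety on which a fixed discriminant-type polynomial vanishes. Consequently, $G(K)$ contains the open dense set $G'$, and density of $G(K)$ in $G$ is immediate.

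For openness I would argue pointwise. If $g_0 \in G(K) \cap G'$, then the open neighbourhood $G' \ni g_0$ lies inside $G(K)$ and nothing remains to prove. If instead $g_0 \in G(K) \setminus G'$, i.e.\ $g_0$ is singular yet transversal, then I claim that $g_0$ can have no fixed point on $G/K$. Indeed, by Lemma \ref{prop:transversality} simplicity of a hypothetical fixed point $x_0K$ is equivalent to $\det(\1 - \Ad^G_K(h)) \neq 0$ for $h \in h(g_0, x_0K)$; under the rank-equality hypothesis, the argument in the proof of the preceding proposition (placing $h$ in a compact Cartan subgroup $T \subseteq K$ which is simultaneously Cartan in $G$, and noting that a singular $h$ has some root $\alpha$ with $\alpha(\log h) \equiv 0 \pmod{2\pi i}$) forces this determinant to vanish whenever $h$ is singular. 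Since $g_0$ singular implies $h$ singular, any fixed point of $g_0$ would contradict transversality.

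It thus remains to verify that the fixed-point-free set is open in $G$, equivalently that the set of elliptic elements $\bigcup_{x \in G} xKx^{-1}$ is closed. This is the main obstacle, and it does not follow trivially from compactness of $K$ since $G$ itself is non-compact. To handle it, I would exploit the Cartan decomposition $G = K(\exp \p)K$ together with the compactness of $K$: for a convergent sequence of elliptic elements $g_n = x_n k_n x_n^{-1}$ with $k_n \in K$, bounded $(x_n)$ immediately yield an elliptic limit, while the unbounded case is controlled via the $KAK$ coordinates and the convexity of the displacement function on the Hadamard manifold $G/K$, which allows one to track fixed points of $l_{g_n^{-1}}$ continuously and pin down a fixed point of the limit. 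Putting the two cases together, $G \setminus G(K)$ is closed and $G(K)$ is therefore open as well as dense.
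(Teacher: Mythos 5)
Your density argument is exactly the paper's: $G'\subseteq G(K)$ by the preceding proposition, $G'$ is dense, done. It is the openness claim (which the paper disposes of with a bare ``Clearly, $G(K)$ is open.'') where your attempt to supply an actual proof goes wrong, at two separate places.

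First, the claim that a singular $g_0\in G(K)$ must be fixed-point free is false. You argue that singularity of $h\in h(g_0,x_0K)\cap T$ gives a root $\alpha$ with $e^{\alpha(\log h)}=1$, forcing $\det(\1-\Ad^G_K(h))=0$. But $\Ad^G_K(h)$ acts on $\g/\k\cong\p$, to which only the \emph{noncompact} roots contribute; a vanishing \emph{compact} root makes $h$ singular without killing that determinant. Concretely, in $G=\SU(2,1)$ with $T$ the diagonal compact Cartan, $h=\mathrm{diag}(e^{i\theta},e^{i\theta},e^{-2i\theta})$ with $3\theta\notin 2\pi\Z$ is singular (the compact root $e_1-e_2$ vanishes), while the noncompact roots $e_1-e_3$, $e_2-e_3$ evaluate to $3i\theta\neq 0$, so $\det(\1-\Ad^G_K(h))\neq 0$. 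Thus $h\in G(K)\setminus G'$ has a simple fixed point at $eK$, and your case split does not reduce matters to fixed-point-free elements.

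Second, the set $\bigcup_{x\in G}xKx^{-1}$ of elliptic elements is genuinely \emph{not} closed in a noncompact $G$, so the reduction to its closedness is a dead end. Already in $\SL(2,\R)$, with $k_\theta\in\SO(2)$ and $a_t=\mathrm{diag}(e^t,e^{-t})$, one has $a_tk_\theta a_t^{-1}\to\begin{pmatrix}1&-c\\0&1\end{pmatrix}$ when $\theta\to 0$, $t\to\infty$ with $e^{2t}\sin\theta\to c\neq 0$; the limit is a nontrivial unipotent, not elliptic. Your proposed repair for the unbounded case---using convexity of the displacement function to ``track fixed points of $l_{g_n^{-1}}$ continuously and pin down a fixed point of the limit''---cannot work, precisely because the fixed points escape to infinity and the limit can be fixed-point free. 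Worse, whenever $K_{\mathrm{sing}}:=\{h\in K:\det(\1-\Ad^G_K(h))=0\}$ contains noncentral elements (as it does for $\SU(2,1)$, e.g.\ $\mathrm{diag}(e^{2i\phi},e^{-i\phi},e^{-i\phi})$), the same construction produces $g_n=a_nh_na_n^{-1}\notin G(K)$ converging to a unipotent in $G(K)$. This shows that openness of $G(K)$ on the noncompact space $G/K$ is far subtler than the paper's ``clearly'' suggests, and cannot be reached by the route you propose.
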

\begin{proof}
Clearly,  $G(K)$ is open. Since the set of regular elements $G'$ is dense in $G$, the corollary follows from the previous proposition. 
\end{proof}

\begin{remark}
\label{rem:2}
To close this section, let us remark that with  $G$ as above, and $P$ a parabolic subgroup of $G$, it is a classical result that $G' \subset G(P)$, see \cite{clozel}, page 51.
\end{remark}

\section{Character formulae}
\label{Sec:6}
Let the notation be as before. We are now in a position to describe the distributions $\Theta_\pi^s$ and $\Theta_\pi$ introduced in Section \ref{Sec:3}. Thus, let $(\pi,C(\widetilde \X))$ be the regular representation of $G$ on the Oshima compactification $\widetilde \X$ of the Riemannian symmetric space $\X= G/K$ of non-compact type, and denote by $\Phi_g(\tilde x)= g \cdot \tilde x$ the $G$-action on $\widetilde \X$. Let further  $G(\widetilde \X)\subset G$ be the set of elements $g$ in $G$ acting transversally on $\widetilde \X$.  
\begin{remark}
The set  $G(\widetilde \X)$ is open.  Corollary \ref{cor:1} and Remark \ref{rem:2} imply that $G(\widetilde \X) $ is dense  if $\rank \,( G/K)=1$, and non-empty if $\rank \, ( G/K)=2$, and $\rank\, (G)=\rank\, (K)$.
\end{remark}

\begin{theorem}  Let  $f \in \CT(G)$ have  support in $G(\widetilde \X)$, and $s \in \C$, $\Re s>-1$. Then
\bq
\label{eq:FPF}
\Tr_s \pi(f) =\int_{G(\widetilde \X)}  f(g) \left (\sum_{\tilde x \in \mathrm{Fix}(g)}  \sum _\gamma       \frac{ \alpha_\gamma (\tilde x) |x_{k+1}(\kappa_\gamma^{-1}(\tilde x)) \cdots
x_{k+l}(\kappa_\gamma^{-1}(\tilde x))|^{s+1} }{ |\det (1 -d\Phi_{g^{-1}}(\tilde x))|} \right ) d_G(g),
\eq
where $\Fix(g)$ denotes the set of fixed points of $g$ on $\widetilde \X$.  In particular, $\Theta_\pi^s:\CT(G) \ni f \to \Tr_s \pi(f)\in \C$ is regular on $G(\widetilde \X)$. 
\end{theorem}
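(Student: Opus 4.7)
The plan is to start from the local formula of Proposition \ref{prop:1}, replace $\tilde A_f^\gamma(x,0)$ by its oscillatory-integral representation, interchange the $x$-, $\xi$- and $g$-integrations, and then evaluate the resulting $\xi$-integral as a pullback of the Dirac distribution by $\Psi_\gamma(g,\cdot)$ on the transversal zero set.

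Concretely, substituting $\tilde A_f^\gamma(x,0)=\int \tilde a_f^\gamma(x,\xi)\,\dbar\xi$ together with the defining formula \eqref{eq:auxsym} and invoking the rapid decay of $\tilde a_f^\gamma$ in $\xi$ (obtained, as in the proof of Proposition \ref{prop:1}, via integration by parts along the invariant vector fields $dL(X^\alpha)$) permits Fubini, yielding
\begin{equation*}
\Tr_s \pi(f) = \int_G f(g) \sum_\gamma I_\gamma(g,s)\, d_G(g),
\end{equation*}
where
\begin{equation*}
I_\gamma(g,s)=\int_{W_\gamma} (\alpha_\gamma\circ \phi_\gamma)(x)\,|x_{k+1}\cdots x_{k+l}|^s\, c_\gamma(x,g)\left(\int e^{i\Psi_\gamma(g,x)\cdot \xi}\,\dbar\xi\right) dx.
\end{equation*}
For $g\in G(\widetilde\X)$, the inner $\xi$-integral is understood as the pullback $\Psi_\gamma(g,\cdot)^\ast \delta_0$; its support consists of those $x_0\in W_\gamma$ with $\phi_\gamma(x_0)\in\Fix(g)$ and $\chi_j(g,\phi_\gamma(x_0))=1$ for all $j$. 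The Jacobian at such a zero is computed from the chain rule $\phi_\gamma^g=\phi_\gamma^{-1}\circ\Phi_{g^{-1}}\circ\phi_\gamma$ together with the interior identity $\Psi_\gamma(g,x)=({\bf 1}_k\otimes T_x^{-1})(\phi_\gamma^g(x)-x)$, giving
\begin{equation*}
|\det d_x \Psi_\gamma(g,x_0)| \;=\; \frac{|\det(I-d\Phi_{g^{-1}}(\tilde x_0))|}{|x_{0,k+1}\cdots x_{0,k+l}|},
\end{equation*}
which is nonzero by transversality. The standard change-of-variables then gives
\begin{equation*}
I_\gamma(g,s)=\sum_{x_0}\frac{\alpha_\gamma(\tilde x_0)\,|x_{0,k+1}\cdots x_{0,k+l}|^{s+1}\,\bar\alpha_\gamma(\tilde x_0)}{|\det(I-d\Phi_{g^{-1}}(\tilde x_0))|},
\end{equation*}
where $c_\gamma(x_0,g)=\bar\alpha_\gamma(g^{-1}\cdot\tilde x_0)=\bar\alpha_\gamma(\tilde x_0)=1$ on $\supp\alpha_\gamma$. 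Fixed points of $g$ that are \emph{not} zeros of $\Psi_\gamma(g,\cdot)$ must lie on a boundary orbit with some $x_{0,k+j}=0$, hence contribute zero to the right-hand side when $\Re s>-1$; this permits extending the sum to all of $\Fix(g)\cap \widetilde W_\gamma$, and summing over $\gamma$ yields \eqref{eq:FPF}. Regularity of $\Theta_\pi^s$ on $G(\widetilde\X)$ then follows, since the integrand in $g$ is continuous (in fact real-analytic) on the open transversal set, hence locally integrable.

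The main obstacle is the rigorous interpretation of the pullback $\Psi_\gamma(g,\cdot)^\ast\delta_0$ and the Fubini step against the non-smooth weight $|x_{k+1}\cdots x_{k+l}|^s$. I would first establish the identity for $\Re s$ large, where the weight is smooth and vanishes to high order on the boundary strata, so that every interchange is absolutely convergent. The passage to $\Re s>-1$ is then by analytic continuation: both sides are holomorphic in $s$ on this half-plane—the left side by Proposition \ref{prop:1} (no pole at $s\in(-1,0]$), the right side because each summand carries the factor $|x_{0,k+1}\cdots x_{0,k+l}|^{s+1}$ with positive real exponent—so they must agree throughout.
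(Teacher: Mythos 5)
Your conceptual picture is the right one: after unfolding $\tilde A_f^\gamma(x,0)$ into an oscillatory integral, the $\xi$-integral should be read as a pullback of $\delta_0$ along $\Psi_\gamma(g,\cdot)$, the Jacobian factor you compute is correct and is exactly what produces the $|x_{k+1}\cdots x_{k+l}|^{s+1}$ and $|\det(\1-d\Phi_{g^{-1}})|^{-1}$, and the observation that boundary fixed points (some $x_{0,k+j}=0$) contribute nothing for $\Re s>-1$ is right and matches the statement. But there is a genuine gap at the first step, and it is exactly where you say "permits Fubini." The rapid decay of $\tilde a_f^\gamma$ in $\xi$ is a \emph{consequence} of the $g$-integration (integration by parts along $dL(X^\alpha)$ inside the $d_G(g)$-integral); once you unfold $\tilde a_f^\gamma$ back into $\int_G e^{i\Psi_\gamma(g,x)\cdot\xi}c_\gamma f\,d_G(g)$ and try to move the $g$-integral to the outside, the $\xi$-integrand has modulus one and the triple integral is not absolutely convergent, so Fubini as invoked does not apply, and the displayed $I_\gamma(g,s)$ with the $\xi$-integral innermost has no meaning as a Lebesgue iterated integral. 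The remedy you propose (work at large $\Re s$, then analytically continue) does not by itself fix this: the smoothness of the weight $|x_{k+1}\cdots x_{k+l}|^s$ in $x$ gives you no decay in $\xi$ at fixed $(x,g)$, and while one could try to recover the needed $\xi$-decay by integrating by parts in $x$ \emph{before} the $\xi$-integral, that route is delicate because the singular change of coordinates $y=({\bf 1}_k\otimes T_x^{-1})(\phi_\gamma^g(x)-x)$ degenerates on the boundary strata, so this would have to be argued carefully rather than assumed.

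The paper closes precisely this gap by a different and cleaner device: insert a cutoff $\chi(\epsilon\xi)$ with $\chi\in\CT$, $\chi\equiv1$ near $0$. With $\chi(\epsilon\xi)$ in place, every integral in sight is absolutely convergent for each fixed $\epsilon>0$, so all the interchanges you want to do are legal, and $\Tr_s\pi(f)=\lim_{\epsilon\to0}\int_G I_\epsilon(g)\,d_G(g)$ by dominated convergence. The real analytic work is then in computing $\lim_{\epsilon\to0}I_\epsilon(g)$: one splits $I_\epsilon(g)$ with a $g$-dependent cutoff $\beta_g$ localized near $\Fix(g)$, kills the part away from the fixed points by integrating by parts in $\xi$ (there $|\Psi_\gamma(g,x)|$ is bounded below), and near the fixed points performs your change of variables $y=x-\phi_\gamma^g(x)$ followed by a rescaling $y\mapsto\epsilon y$, after which $\hat\chi$ integrates to $\chi(0)=1$ and the claimed sum over $\Fix(g)$ emerges. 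This works directly for all $\Re s>-1$, with no need for the analytic continuation step at the end; the continuation argument is deferred in the paper to the subsequent corollary for the value at $s=-1$. In short: your endgame and the Jacobian bookkeeping are correct, but the crux of the proof is the regularization that legitimizes the interchange and the pullback of $\delta_0$, and this is the part your outline leaves unproved.
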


\begin{proof}
By Proposition \ref{prop:1}, 
\begin{align*}
\Tr_s \pi(f)&=\sum _\gamma \int _{W_\gamma}
(\alpha_\gamma \circ \phi_{\gamma})(x) |x_{k+1} \cdots
x_{k+l}|^{s}   \widetilde A_f^\gamma(x,0) dx
\end{align*}
is a meromorphic function in $s$ with possible poles at $-1,-3,\dots$. Assume that $\Re s>-1$. Since $\alpha_\gamma \in \CT(\widetilde W_\gamma)$, and 
$
 \widetilde A_f^\gamma(x,0)=\int \tilde a_f^\gamma(x,\xi) \dbar \xi,
$
where $ \tilde a_f^\gamma(x,\xi)\in \Sym^{-\infty}_{la}(W_\gamma \times \R^{k+l})$ is rapidly decaying in $\xi$ by Theorem \ref{thm:I}, we can interchange the order of integration to  obtain
\begin{align*}
\Tr_s \pi(f)&=\sum _\gamma \int  \int _{W_\gamma}
(\alpha_\gamma \circ \phi_{\gamma})(x) |x_{k+1} \cdots
x_{k+l}|^{s} \tilde a_f^{\gamma}(x,\xi)  dx \, \dbar \xi.
\end{align*}
Let $\chi \in \CT(\R^{k+l}, \R^+)$ be equal $1$ in a neighborhood of $0$, and $\epsilon >0$. Then, by Lebesgue's theorem on bounded convergence,
\bqn 
\Tr_s \pi(f)=\lim_{\epsilon \to 0} I_\epsilon,
\eqn
where we defined
\bqn
I_\epsilon= \sum _\gamma \int  \int _{W_\gamma}
(\alpha_\gamma \circ \phi_{\gamma})(x) |x_{k+1} \cdots
x_{k+l}|^{s} \tilde a_f^{\gamma}(x,\xi)  \chi(\epsilon \xi) \, dx  \, \dbar \xi.
\eqn
Taking into account \eqref{eq:auxsym}, and interchanging the order of integration once more,  one sees that 
\begin{align*}
I_\epsilon&= \int_{G}   f(g) \sum _\gamma   \int \int _{W_\gamma}e^{i \Psi_\gamma(g,x) \cdot  \xi} c_{\gamma}(x,g)(\alpha_\gamma \circ \phi_{\gamma})(x) |x_{k+1} \cdots x_{k+l}|^{s} \chi(\epsilon \xi) dx \,  \dbar \xi \,  d_G(g),
\end{align*}
everything in sight being absolutely convergent. Let us now set
\bqn
 I_\epsilon(g)=  f(g) \sum _\gamma   \int \int _{W_\gamma}e^{i \Psi_\gamma(g,x) \cdot  \xi} c_{\gamma}(x,g) (\alpha_\gamma \circ \phi_{\gamma})(x) |x_{k+1} \cdots x_{k+l}|^{s} \chi(\epsilon \xi) dx \,  \dbar \xi,
\eqn
so that 
$
I_\epsilon= \int_G I_\epsilon(g) \, d_G(g). 
$
We would like to pass to the limit under the integral, for which we
are going to show that $\lim_{\epsilon \to 0} I_\epsilon(g)$ is an integrable function on $G$. For this, let us  fix an arbitrary  $g \in G(\widetilde \X)$. By definition, $g$ acts only with simple fixed points on $\widetilde \X$. Since each of them is isolated, $g$ can have at most finitely many fixed points on $\widetilde \X$. Consider therefore a cut--off function $\beta_g \in \Cinft(\widetilde \X,\R^+)$ which is  equal  $1$ in a small neighborhood of each fixed point of $g$,  and whose  support decomposes into a union of connected components each of them containing only one fixed point of $g$. By choosing the support of $\beta_g$ sufficiently close to the fixed points we can, in addition,  assume that 
\bq
\label{eq:21}
\det(d\Phi_g(\tilde x) -\1)\not=0 \quad \text{ on } \supp \beta_g.
\eq  
Since the action of $G$ is real analytic, we obtain a family of functions  $\beta_g(\tilde x)$ depending analytically on $g\in G(\widetilde \X)$. Multiplying the integrand of $I_\epsilon(g)$ with $\beta_g \circ \phi_\gamma(x)$, and $1-\beta_g \circ \phi_\gamma(x)$, respectively, we obtain the decomposition
\bqn 
I_\epsilon(g)= I^{(1)}_\epsilon(g)+I^{(2)}_\epsilon(g).
\eqn
Let us first examine what happens  away from the fixed points. Integrating by parts $2N$ times with respect to $\xi$ yields
\begin{gather*}
I^{(2)}_\epsilon(g)=  f(g) \sum _\gamma  \int \int _{W_\gamma}e^{i\Psi_\gamma(g,x) \cdot  \xi}  c_\gamma(x,g)(\alpha_\gamma (1-\beta_g))(\phi_{\gamma}(x)) |x_{k+1} \cdots x_{k+l}|^{s} \chi(\epsilon \xi) dx \,  \dbar \xi\\
=  f(g) \sum _\gamma  \int \int _{W_\gamma} \frac {e^{i\Psi_\gamma(g,x)\cdot  \xi}}{|\Psi_\gamma(g,x)|^{2N}} \Delta^N_\xi[\chi(\epsilon \xi)] c_\gamma(x,g)  (\alpha_\gamma (1-\beta_g))(\phi_{\gamma}(x)) |x_{k+1} \cdots x_{k+l}|^{s}  dx \,  \dbar \xi, 
\end{gather*}
 where $\Delta_\xi=\gd^2_{\xi_1}+ \dots +\gd^2_{\xi_{k+l}}$. Now,  for arbitrary $N$, 
 \bqn 
|  \Delta^N_\xi[\chi(\epsilon \xi)]| \leq C_N (1+|\xi|^2)^{-N}, 
\eqn
where $C_N$ does not depend on $\epsilon$ for $0< \epsilon \leq 1$. Furthermore, there exists a constant $M_f>0$ such that $|\Psi_\gamma(g,x)|^{2N}\geq M_f$ on the support of $1-\beta_g \circ \phi_\gamma$ for all $g \in \supp f$ and $\gamma$. By Lebesgue's theorem, we may therefore pass to the limit  under the integral, and obtain
\bqn 
\lim_{\epsilon \to 0} I^{(2)}_\epsilon(g)=0.
\eqn
Hence, as $\epsilon \to 0$, the main contributions to $I_\epsilon(g)$ originate from the fixed points of $g$. To examine these contributions, note that  condition \eqref{eq:21} implies that  $x \mapsto \phi^g_\gamma(x)-x$ defines a diffeomorphism on each of the connected components of $\supp (\alpha_\gamma\beta_g)\circ \phi_\gamma$  onto their respective  images. Performing the change of variables $y=x-\phi ^g_\gamma(x)$ we get 
\begin{align*}
I^{(1)}_\epsilon(g)&= f(g) \sum _\gamma  \int \int _{W_\gamma}e^{i\Psi_\gamma(g,x) \cdot  \xi} c_\gamma(x,g)(\alpha_\gamma \beta_g)(\phi_{\gamma}(x)) |x_{k+1} \cdots x_{k+l}|^{s} \chi(\epsilon \xi) dx \,  \dbar \xi \\
&= f(g)\sum _\gamma     \int \int  e^{-i ({\bf 1}_k\otimes T^{-1}_{x(y)})y \cdot \xi }
 |x_{k+1}(y) \cdots x_{k+l}(y)|^{s}   \frac{(\alpha_\gamma\beta_g)(\phi_{\gamma}(x(y)))c_{\gamma}(x(y),g)}{|\det (\1 -d\phi^g_\gamma(x(y)))|} \chi(\epsilon \xi) dy  \,   \dbar \xi\\
&=f(g) \sum _\gamma   \int  
 |x_{k+1}(y) \cdots
x_{k+l}(y)|^{s}  c_{\gamma}(x(y),g)  \frac{(\alpha_\gamma\beta_g)(\phi_{\gamma}(x(y)))\hat \chi(({\bf 1}_k\otimes T^{-1}_{x(y)})y/\epsilon )}{(2\pi)^{k+l}\epsilon^{k+l} |\det (\1 -d\phi^g_\gamma(x(y)))|}dy \\
&=  f(g)\sum _\gamma   \int  
 |x_{k+1}(\epsilon y) \cdots x_{k+l}(\epsilon y)|^{s}  c_{\gamma}(x(\epsilon y),g)  \frac{(\alpha_\gamma\beta_g)( \phi_{\gamma}(x(\epsilon y)))  \hat \chi(({\bf 1}_k\otimes T^{-1}_{x(\epsilon y)})y )}{(2\pi)^{k+l} |\det (\1 -d\phi^g_\gamma(x(\epsilon y)))|}dy.
 \end{align*}
Since  in a neighborhood of a  fixed point $\tilde x$ of $g$ the Jacobian of the singular change of coordinates $z=({\bf 1}_k\otimes T^{-1}_{x(\epsilon y)}) y$ converges to the expression $ |x_{k+1}(\kappa_\gamma^{-1}(\tilde x)) \cdots x_{k+l}(\kappa_\gamma^{-1}(\tilde x))|^{-1}$ as $\epsilon \to 0$, we finally obtain with $(2\pi)^{-k-l}\int \hat \chi(y ) dy=\chi(0)=1$ that
\begin{gather*}
\lim_{\epsilon \to 0} I^{(1)}_\epsilon(g)= \lim_{\epsilon \to 0} f(g)\\
\cdot   \sum _\gamma   \int  
 |x_{k+1}(\epsilon y(z)) \cdots x_{k+l}(\epsilon y(z))|^{s}  c_{\gamma}(x(\epsilon y(z)),g)  \frac{(\alpha_\gamma\beta_g)( \phi_{\gamma}(x(\epsilon y(z))))   |\gd y/\gd z| }{(2\pi)^{k+l} |\det (\1 -d\phi^g_\gamma(x(\epsilon y(z) )))|} \hat \chi(z)  dz \\
= f(g)\sum_{\tilde x \in \mathrm{Fix}(g)}  \sum _\gamma   \frac{ \alpha_\gamma (\tilde x) |x_{k+1}(\kappa_\gamma^{-1}(\tilde x)) \cdots x_{k+l}(\kappa_\gamma^{-1}(\tilde x))|^{s+1} }{ |\det (\1 -d\Phi_{g^{-1}}(\tilde x))|},
\end{gather*}
since $\bar \alpha_\gamma \equiv 1 $ on $\supp \alpha_\gamma$, and $\beta_g(\tilde x)=1$. 
The limit function $\lim_{\epsilon \to 0} I_\epsilon(g)$ is therefore clearly integrable on $G$ for $\Re s > -1$, so that by passing to the limit under the integral one computes
\begin{align*}
\Tr_s \pi(f)&=\lim_{\epsilon \to 0} I_\epsilon= \lim_{\epsilon \to 0} \int_G I_\epsilon(g) \, d_G(g)=\int _G
\lim_{\epsilon \to 0} \big (I^{(1)}_\epsilon+I^{(2)}_\epsilon\big )(g) d_G(g) \\
&=\int_G  f(g) \sum_{\tilde x \in \mathrm{Fix}(g)}  \sum _\gamma         \frac{ \alpha_\gamma (\tilde x) |x_{k+1}(\kappa_\gamma^{-1}(\tilde x)) \cdots
x_{k+l}(\kappa_\gamma^{-1}(\tilde x))|^{s+1} }{ |\det (1 -d\Phi_{g^{-1}}(\tilde x))|} \, d_G(g),
\end{align*}
 yielding the desired description of $\Theta_\pi^s$.  
 \end{proof}

As an immediate consequence of the previous theorem, we see that if $f 
\in \CT(G(\widetilde \X))$,  $\Tr_s \pi(f)$ is not singular at $s=-1$. This observation  leads to the following

\begin{corollary} 
\label{cor:2}
Let $f \in \CT(G)$ have support in $G(\widetilde \X)$. Then 
\bqn
\Tr_{reg} \pi(f)= \Tr_{-1} \pi(f)= \int_{G(\widetilde \X)}   f(g)  \sum_{\tilde x \in \mathrm{Fix}(g)}     \frac{1 }{ |\det (1 -d\Phi_{g^{-1}}(\tilde x))|} \, d_G(g).
\eqn
In particular, the distribution $\Theta_\pi:f \to \Tr_{reg}(f)$ is regular on $G(\widetilde \X)$. 
\end{corollary}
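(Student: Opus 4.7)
The plan is to derive the corollary directly from the preceding theorem by analytic continuation in $s$. My first step would be to observe that, for $f \in \CT(G)$ with $\supp f \subset G(\widetilde\X)$, the right-hand side of \eqref{eq:FPF}, viewed as a function of $s$, extends to an entire function on $\C$: the $g$-integration runs over the compact set $\supp f$; the sum over $\tilde x \in \Fix(g)$ is finite by transversality together with the compactness of $\widetilde\X$; the Jacobian factors $|\det(\1 - d\Phi_{g^{-1}}(\tilde x))|^{-1}$ are independent of $s$ and locally bounded on this domain; and each factor $|x_{k+1}(\kappa_\gamma^{-1}(\tilde x))\cdots x_{k+l}(\kappa_\gamma^{-1}(\tilde x))|^{s+1}$ is of the form $a^{s+1}$ with $a \geq 0$, hence entire in $s$.

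Since $s \mapsto \Tr_s \pi(f)$ is meromorphic on $\C$ and coincides with this entire function on $\Re s > -1$, uniqueness of analytic continuation forces equality everywhere; in particular $\Tr_s \pi(f)$ is regular at $s = -1$. Applied to the specific test function $v_f := \sum_\gamma (\alpha_\gamma \circ \phi_\gamma)\widetilde A_f^\gamma(\cdot,0)$, this means that the Laurent expansion $\eklm{|x_{k+1}\cdots x_{k+l}|^s,\, v_f} = \sum_j \eklm{S_j, v_f}(s+1)^j$ at $s=-1$ has no negative-power terms, so its constant term $\Tr_{reg} \pi(f) = \eklm{S_0, v_f}$ coincides with the value of the analytic continuation at $s = -1$, namely $\Tr_{-1}\pi(f)$.

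It then remains to specialize \eqref{eq:FPF} to $s = -1$. At each fixed point $\tilde x \in \Fix(g)$ lying in the open stratum $2^{\#l}(G/K)$, the coordinates $x_{k+1},\ldots,x_{k+l}$ of $\kappa_\gamma^{-1}(\tilde x)$ are all nonzero, so the factor $|x_{k+1}\cdots x_{k+l}|^{s+1}$ tends to $1$ as $s \to -1$; any putative fixed points on the lower-dimensional boundary strata would carry a factor identically zero on $\Re s > -1$, and so drop out by analytic continuation. The inner $\gamma$-sum then collapses via the partition-of-unity identity $\sum_\gamma \alpha_\gamma(\tilde x) = 1$, leaving exactly $|\det(\1 - d\Phi_{g^{-1}}(\tilde x))|^{-1}$ per contributing fixed point, and yielding the asserted formula. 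Regularity of $\Theta_\pi$ on $G(\widetilde\X)$ is then immediate, since $f$ enters the resulting integral linearly against the locally bounded function $g \mapsto \sum_{\tilde x \in \Fix(g)} |\det(\1 - d\Phi_{g^{-1}}(\tilde x))|^{-1}$ on $G(\widetilde\X)$.

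The main conceptual step is the analytic-continuation observation; the only real care needed is to justify interchanging the limit $s \to -1$ with the $g$-integration in \eqref{eq:FPF}, which is routine given the entire dependence in $s$ and the local boundedness just noted.
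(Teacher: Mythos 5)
Your proof follows essentially the same route as the paper's: use \eqref{eq:FPF} to conclude that $\Tr_s\pi(f)$ is regular at $s=-1$ when $\supp f \subset G(\widetilde\X)$, deduce that the negative Laurent coefficients $S_j$ annihilate the test function $v_f = \sum_\gamma (\alpha_\gamma\circ\phi_\gamma)\widetilde A_f^\gamma(\cdot,0)$, hence $\Tr_{reg}\pi(f) = \langle S_0, v_f\rangle = \Tr_{-1}\pi(f)$, and then read off the value from \eqref{eq:FPF}. The paper's own proof is terse on all of these points, stating only that \eqref{eq:FPF} implies regularity at $s=-1$ and that "the assertion now follows with the previous theorem."

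Where you add genuine value is in making two steps explicit that the paper passes over in silence. First, you justify the regularity claim by showing the right-hand side of \eqref{eq:FPF} is entire in $s$ (finite fixed-point sum, $s$-independence and boundedness of the Jacobian factors on the compact support, entirety of $a \mapsto a^{s+1}$); the paper simply asserts "by \eqref{eq:FPF}, $\Tr_s\pi(f)$ has no pole at $s=-1$" without noting that \eqref{eq:FPF} is stated only on the open half-plane $\Re s > -1$, so some form of boundary-value or analytic-continuation argument is needed. Second, and more substantively, you isolate the role of boundary fixed points: their weight $|x_{k+1}\cdots x_{k+l}|^{s+1}$ vanishes identically on $\Re s > -1$ and therefore drops out at $s=-1$ by continuity, whereas open-stratum fixed points pick up the weight $1$ and the $\gamma$-sum collapses by $\sum_\gamma \alpha_\gamma(\tilde x) = 1$. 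This is a subtlety the paper does not address: the displayed corollary nominally sums over all of $\Fix(g)$ with uniform weight $|\det(\1-d\Phi_{g^{-1}}(\tilde x))|^{-1}$, but as your argument shows, the limit actually assigns zero weight to any fixed point lying on a lower-dimensional stratum. Your phrasing "per contributing fixed point" acknowledges this implicitly; it would be cleaner to state outright that the sum effectively runs over $\Fix(g)\cap 2^l(G/K)$, which either coincides with $\Fix(g)$ (if transversal elements have no boundary fixed points, a fact neither you nor the paper establishes) or is what the displayed formula should really say.
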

\begin{proof}
 Consider 
the Laurent expansion of $\Theta_\pi^s(f)$ at $s=-1$
given by
\bqn 
\Tr_s \pi(f)= \eklm{|x_{k+1} \cdots x_{k+l}|^s,\sum_\gamma
(\alpha_\gamma \circ \phi_{\gamma})\widetilde
A_f^\gamma(\cdot,0)}= \sum_{j=-q}^\infty
S_j\Big (\sum_\gamma
(\alpha_\gamma \circ \phi_{\gamma})\widetilde
A_f^\gamma(\cdot,0)\Big ) (s+1)^j,
\eqn
where $S_k \in \D'(\R^{k+l})$. Since by \eqref{eq:FPF}, $\Tr_s \pi(f)$ has  no pole at $s=-1$, we necessarily
must have
\bqn 
S_j\Big (\sum_\gamma
(\alpha_\gamma \circ \phi_{\gamma})\widetilde
A_f^\gamma(\cdot,0)\Big )=0 \qquad \text{ for } j<0,
\eqn
so that
\bqn 
\Tr_{-1} \pi(f)=\eklm{S_0,\sum_\gamma
(\alpha_\gamma \circ \phi_{\gamma})\widetilde
A_f^\gamma(\cdot,0)}=\Tr_{reg} \pi(f).
\eqn
The assertion now follows with the previous theorem. 
\end{proof}

In particular,  Corollary \ref{cor:2}  implies that $\Tr_{reg}\pi(f)$ is invariantly defined. Now, interpreting $\pi(g)$ as a geometric endomorphism on the trivial bundle $E=\widetilde \X \times \C$ over the Oshima compactification $\widetilde \X$, 
 a transversal trace     $\Tr^\flat \pi(g)$ of  $\pi(g) $ can be defined according to 
  \bqn 
  \Tr^\flat \pi(g)=\Tr_{\Phi_{g^{-1}}}(\Gamma(\phi_g)),
  \eqn  
  where $\phi_{g}:\Phi_{g^{-1}}^\ast E \rightarrow E$ is the associated bundle homomorphism which identifies the fiber $E_{\Phi_{g^{-1}}(\tilde x)}$ with $E_{\tilde x}$, and satisfies $(\Tr \phi_g)_{|\tilde x}=1$ at each fixed point $\tilde x$ of $g$. Taking into account  \eqref{eq:M},  the previous corollary can be reformulated, and we finally deduce the following character formula for the distribution character of $\pi$. 
\begin{theorem}
On the set of transversal elements $G(\widetilde \X)$, the distribution $\Theta_\pi:f\to\Tr_{reg}(f)$ is given by
\bqn 
\Tr_{reg} \pi(f)=\int_{G(\widetilde \X)} f(g) \Tr^\flat \pi(g) d_G(g), \qquad f \in \CT(G(\widetilde \X)),
\eqn
where 
\bqn 
\Tr^\flat \pi(g)=  \sum_{\tilde x \in \mathrm{Fix}(g)}     \frac{1 }{ |\det (1 -d\Phi_{g^{-1}}(\tilde x))|},
\eqn
the sum being over the (simple) fixed points of $g \in G(\widetilde \X)$ on $\widetilde \X$.
\end{theorem}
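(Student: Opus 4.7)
The plan is to observe that this theorem is essentially a restatement of Corollary \ref{cor:2} in the language of transversal traces developed in Section \ref{sec:AB}, so the main work has already been done. What remains is to identify the sum over fixed points appearing on the right-hand side of Corollary \ref{cor:2} with the transversal trace $\Tr^\flat \pi(g)$ of the operator $\pi(g)$ viewed as a geometric endomorphism on the trivial line bundle $E=\widetilde \X \times \C$.

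First, I would recall Corollary \ref{cor:2}, which already asserts that for $f \in \CT(G(\widetilde \X))$,
\[
\Tr_{reg}\pi(f) = \int_{G(\widetilde \X)} f(g) \sum_{\tilde x \in \Fix(g)} \frac{1}{|\det(1 - d\Phi_{g^{-1}}(\tilde x))|}\, d_G(g).
\]
So it suffices to verify that the inner sum equals $\Tr^\flat \pi(g)$. For this I would invoke the Atiyah--Bott formula \eqref{eq:M}: whenever $A$ is induced by a bundle homomorphism $\phi$ over a transversal map $\alpha$, one has
\[
\Tr_\alpha A = \sum_{x \in \Fix(\alpha)} \frac{\Tr \phi_x}{|\det(\1 - d\alpha(x))|}.
\]
Applied here with $\alpha=\Phi_{g^{-1}}$ and $\phi=\phi_g$ the tautological identification of fibers of the trivial bundle (so $(\Tr \phi_g)_{|\tilde x}=1$ at every fixed point), each numerator collapses to $1$, giving
\[
\Tr^\flat \pi(g) = \Tr_{\Phi_{g^{-1}}}(\Gamma(\phi_g)) = \sum_{\tilde x \in \Fix(g)} \frac{1}{|\det(\1 - d\Phi_{g^{-1}}(\tilde x))|}.
\]
Inserting this into the formula from Corollary \ref{cor:2} yields the claim.

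The only conceptual point requiring attention is the well-definedness of $\Tr^\flat \pi(g)$ for $g \in G(\widetilde \X)$: transversality of $\Phi_{g^{-1}}$ on $\widetilde \X$ is exactly what allows one to apply the Atiyah--Bott machinery recalled in Section \ref{sec:AB}, and compactness of $\widetilde \X$ guarantees that $\Fix(g)$ is finite so the sum makes sense. I expect no real obstacle here, since all the hard analytic work, namely the passage from the oscillatory-integral expression for $\Tr_s \pi(f)$ to a fixed-point sum, the meromorphic continuation, and the regularity at $s=-1$, has been accomplished in the proof of the previous theorem and Corollary \ref{cor:2}. The present statement is thus a clean repackaging of those results using the Atiyah--Bott transversal trace, and in particular exhibits $\Theta_\pi$ on $G(\widetilde \X)$ as a locally integrable function given by a formula structurally identical to the character formula \eqref{IV} for a parabolically induced representation.
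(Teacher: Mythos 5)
Your proposal matches the paper's own argument: the theorem is obtained by reformulating Corollary \ref{cor:2} via the Atiyah--Bott formula \eqref{eq:M}, interpreting $\pi(g)$ as a geometric endomorphism on the trivial bundle $E=\widetilde \X \times \C$ with the tautological fiber identification $\phi_g$ satisfying $(\Tr\phi_g)_{|\tilde x}=1$ at each fixed point. The paper presents this identification in the paragraph preceding the theorem and leaves the proof itself as immediate, exactly as you argue.
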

\qed

\section{The case $\X=\SL(3,\R)/\SO(3)$}

We shall finish this paper by describing in detail the Oshima compactification of  the Riemannian symmetric space $\X=\SL(3,\R)/\SO(3)$. Thus, let $\g= \spl(3,\R)$ be the Lie algebra of $G$. A Cartan involution $\theta:\g \rightarrow \g$ is given by $X \mapsto -X^t$, where $X^t$ denotes the transpose of $X$, and the corresponding Cartan decomposition of $\g$  reads $\g=\k\oplus \p$, where $\k=\{X \in \spl(3,\R) : X^t=-X \}$, and $\p=\{X \in \spl(3,\R): X^t=X \}$. Next, let 
\bqn
\a=\{ D(a_1,a_2,a_3): a_1,a_2,a_3 \in \R, a_1+a_2+a_3=0\},
\eqn
 where $D(a_1,a_2,a_3)$ denotes the diagonal matrix with diagonal elements $a_1,a_2$ and $a_3$. Then $\a$ is a maximal Abelian subalgebra in $\p$. Define $e_i: \a \rightarrow \R$ by $D(a_1,a_2,a_3) \mapsto a_i$, $i=1,2,3$. The set of roots $\Sigma$ of $(\g,\a)$ is given by $\Sigma=\{\pm(e_i-e_j):1 \leq i < j \leq 3\}$. We order the roots such that the positive roots are $\Sigma^+=\{e_1-e_2,e_2-e_3,e_1-e_3\}$, and obtain $\Delta=\{e_1-e_2,e_2-e_3\}$ as the set of simple roots. The root space corresponding to the root $e_1-e_2$ is given by $$\g^{e_1-e_2}=\left \{  \begin{pmatrix} 0 & x & 0 \\ 0 & 0& 0\\ 0 & 0 &0 \end{pmatrix}:x \in \R \right \},$$
and similar computations show that 
$$\g^{e_2-e_3}=\left \{  \begin{pmatrix} 0 & 0 & 0 \\ 0 & 0& z\\ 0 & 0 &0 \end{pmatrix}:z \in \R \right \},\quad \g^{e_1-e_3}=\left \{  \begin{pmatrix} 0 & 0 & y \\ 0 & 0& 0\\ 0 & 0 &0 \end{pmatrix}:y \in \R \right \}.
$$ 
For a subset $\Theta \subset \Delta$, let $\langle \Theta \rangle$ denote those elements of $\Sigma$ that are given as linear combinations of the roots in $\Theta$. Write $\langle \Theta \rangle^{\pm}$ for $\Sigma^{\pm} \cap \langle \Theta \rangle$. Put $\n^{\pm}(\Theta)=\sum_{\lambda \in \langle \Theta \rangle^{\pm}}\g^{\lambda}$,  and $\n^+_{\Theta}=\sum_{ \lambda \in \Sigma ^+ -{\langle \Theta \rangle}^+}\g^{\lambda}$. Let $\n^-_{\Theta}=\theta(\n^+_{\Theta})$. Consider now the case $\Theta = \{e_1-e_2\}$. Then  $\n^+(e_1-e_2)=\g^{e_1-e_2}$,  and $\n^+_{e_1-e_2}=\g^{e_2-e_3} \oplus \g^{e_1-e_3}$. In other words, 
 $$\n^+_{e_1-e_2}=\left \{  \begin{pmatrix} 0 & 0 & y \\ 0 & 0& z\\ 0 & 0 &0 \end{pmatrix}:y,z \in \R \right \}.$$ Exponentiating, we find that the corresponding analytic subgroups are given by 
 $$
 N^+(e_1-e_2)=\left \{  \begin{pmatrix} 1 & x & 0 \\ 0 & 1& 0\\ 0 & 0 &1 \end{pmatrix}:x \in \R \right \}, \quad N^+_{e_1-e_2}=\left \{  \begin{pmatrix} 1 & 0 & y \\ 0 & 1& z\\ 0 & 0 &1 \end{pmatrix}:y, z \in \R \right \}.$$ In a similar fashion, we obtain that
 \begin{align*}
 \n^-(e_1-e_2)=\g^{e_2-e_1}&=\left \{  \begin{pmatrix} 0 & 0 & 0 \\ x & 0& 0\\ 0 & 0 &0 \end{pmatrix}:x \in \R \right \},\\
  \n^-_{e_1-e_2}=\theta(\n^+_{e_1-e_2})&=\left \{  \begin{pmatrix} 0 & 0 & 0 \\ 0 & 0& 0\\ y & z &0 \end{pmatrix}:y,z \in \R \right \},
 \end{align*}
  and that the corresponding analytic subgroups read
  $$
  N^-(e_1-e_2)=\left \{  \begin{pmatrix} 1 & 0 & 0 \\ x & 1& 0\\ 0 & 0 &1 \end{pmatrix}:x \in \R \right \}, \quad N^-_{e_1-e_2}=\left \{  \begin{pmatrix} 1 & 0 & 0 \\ 0 & 1& 0\\ y & z &1 \end{pmatrix}:y,z \in \R \right \}.$$

The Cartan-Killing form $\langle\cdot ,\cdot \rangle: \g \times \g \rightarrow \R$  is given by $(X,Y) \mapsto \Tr(XY)$, and the modified Cartan-Killing form by $\langle X,Y\rangle_{\theta}:=-\Tr(X\theta(Y))=-\Tr(X(-Y^t))=\Tr(XY^t)$. Next, let $\a(\Theta)=\sum_{\lambda \in \langle \Theta \rangle^+}\R Q_\lambda$, where $Q_\lambda=[\theta X,X]$ for $X \in \g^{\lambda}$ such that $\langle X, X \rangle_{\theta}=1$. Also, let $\a_{\Theta}$ be the orthogonal complement of $\a(\Theta)$ with respect to $\langle\cdot ,\cdot \rangle_{\theta}$. Again, suppose that $\Theta=\{e_1-e_2\}$. We find 
$$
Q_{e_1-e_2}= \begin{pmatrix} 1 & 0 & 0 \\ 0 & -1& 0\\ 0 & 0 &0 \end{pmatrix},
$$
 so that
  $$
  \a(e_1-e_2)=\R Q_{e_1-e_2}=\left \{  \begin{pmatrix} r & 0 & 0 \\ 0 & -r& 0\\ 0 & 0 &0 \end{pmatrix}:r \in \R \right \}.
  $$
   This in turn gives us that $$\a_{e_1-e_2}=\left \{  \begin{pmatrix} a & 0 & 0 \\ 0 & a& 0\\ 0 & 0 &-2a \end{pmatrix}:a \in \R \right \}.$$ Exponentiation then shows that the corresponding analytic subgroups are $$A(e_1-e_2)=\left \{  \begin{pmatrix} a & 0 & 0 \\ 0 & a^{-1}& 0\\ 0 & 0 &1 \end{pmatrix}:a \in \R^+ \right \},A_{e_1-e_2}=\left \{  \begin{pmatrix} a & 0 & 0 \\ 0 & a& 0\\ 0 & 0 &a^{-2} \end{pmatrix}:a \in \R^+ \right \}.$$ 
Take $K=\SO(3)$ as a  maximal compact subgroup of $\SL(3,\R)$, and denote by $M_\Theta(K)$  the centralizer of $\a_\Theta$ in $K$. Observing that the adjoint action of a matrix group $G$ is just the matrix conjugation, we see that $$M_{e_1-e_2}(K)=Z_K(\a_{e_1-e_2})= \begin{pmatrix} \SO(2) & 0  \\ 0 & 1 \end{pmatrix} \cup  \begin{pmatrix} 1& 0 & 0 \\ 0 & -1& 0\\ 0 & 0 &-1 \end{pmatrix}  \begin{pmatrix} \SO(2) & 0 \\ 0 & 1  \end{pmatrix}.$$ Notice that $M_{e_1-e_2}(K)$ has 2 connected components. 
Put $M=Z_K(A)$, and let $P=MAN^+$ be the minimal parabolic subgroup given by the ordering of the roots of $(\g,\a)$. For $G=\SL(3,\R)$ one computes 
$$
M=\left \{ \begin{pmatrix} 1 & 0 & 0 \\ 0 & 1& 0 \\ 0 & 0 & 1\end{pmatrix},\begin{pmatrix} 1 & 0 & 0 \\ 0 & -1& 0 \\ 0 & 0 & -1\end{pmatrix},\begin{pmatrix} -1 & 0 & 0 \\ 0 & -1& 0 \\ 0 & 0 & 1\end{pmatrix},\begin{pmatrix} -1 & 0 & 0 \\ 0 & 1& 0 \\ 0 & 0 & -1\end{pmatrix}\right \}.
$$
 As $\Theta$ varies over the subsets of $\Delta$, we get all the parabolic subgroups $P_\Theta$ of $G$ containing $P$, and we write $P_\Theta=M_\Theta(K)AN^+$. By definition, $P_\Theta (K)=M_\Theta(K)A_\Theta N^+_\Theta$ so that, in particular, 
\begin{align*}
P_{e_1-e_2}(K)=&\left(\begin{pmatrix} \SO(2) & 0  \\ 0 & 1 \end{pmatrix} \cup  \begin{pmatrix} 1& 0 & 0 \\ 0 & -1& 0\\ 0 & 0 &-1 \end{pmatrix}  \begin{pmatrix} \SO(2) & 0 \\ 0 & 1  \end{pmatrix}\right)\cdot \left \{  \begin{pmatrix} a & 0 & 0 \\ 0 & a& 0\\ 0 & 0 &a^{-2} \end{pmatrix}:a \in \R^+ \right \}\\ &  \cdot \left \{  \begin{pmatrix} 1 & 0 & y \\ 0 & 1& z\\ 0 & 0 &1 \end{pmatrix}:z \in \R \right \}.
\end{align*}
The orbital decomposition of the Oshima compactification $\widetilde \X$ of $\X=\SL(3,\R)/ \SO(3)$ is therefore given by $$\widetilde \X=G/P  \sqcup 2\cdot G/P_{e_1-e_2}(K) \sqcup 2 \cdot G/P_{e_2-e_3}(K) \sqcup 2^2 \cdot G/K.$$

\medskip

%%%% Bibliography %%%%%%%%%%%%%%%%%%%

\providecommand{\bysame}{\leavevmode\hbox to3em{\hrulefill}\thinspace}
\providecommand{\MR}{\relax\ifhmode\unskip\space\fi MR }
% \MRhref is called by the amsart/book/proc definition of \MR.
\providecommand{\MRhref}[2]{%
  \href{http://www.ams.org/mathscinet-getitem?mr=#1}{#2}
}
\providecommand{\href}[2]{#2}

%%%% End of document %%%%%%%%%%%%%%%%%

\end{document}